\documentclass[11pt]{amsart}

\usepackage{amssymb}
\usepackage{mathtools}
\usepackage[numbers,sort&compress]{natbib}
\usepackage{xcolor}
\usepackage[margin=1.36in]{geometry}
\usepackage{hyperref}

\hypersetup{
  hidelinks,
  pdfauthor={Feng Guo},
  pdftitle={Palm Disintegration and Height-Profile Recovery for Projected Hardy--Szego Zeros},
  pdfsubject={Palm disintegration, local spacing, and height-profile recovery for projected Hardy--Szego zeros},
  pdfkeywords={Hardy--Szeg\H{o} zero process; determinantal point process;
Palm distribution; projected point process; inverse problem}
}

\title[Palm Disintegration and Height-Profile Recovery]
{Palm Disintegration and Height-Profile Recovery for Projected Hardy--Szeg\H{o} Zeros}

\author{Feng Guo}
\address{School of Mathematics, South China University of Technology,
Guangzhou 510640, P. R. China}
\email{70207994@nuaa.edu.cn}

\subjclass[2020]{Primary 60G55; Secondary 44A10}
\keywords{Hardy--Szeg\H{o} zero process; determinantal point process;
Palm distribution; projected point process; inverse problem}

\numberwithin{equation}{section}

\theoremstyle{plain}
\newtheorem{theorem}{Theorem}[section]
\newtheorem{lemma}[theorem]{Lemma}
\newtheorem{proposition}[theorem]{Proposition}
\newtheorem{corollary}[theorem]{Corollary}

\theoremstyle{definition}

\theoremstyle{remark}
\newtheorem{remark}[theorem]{Remark}

\begin{document}

\begin{abstract}
We study the horizontal projection of the upper-half-plane Hardy--Szeg\H{o} zero process after independent height-dependent thinning, with height retained as an unobserved mark.
We identify the reduced Palm law of the projected process by disintegrating over the missing height coordinate, describe the conditional law of the two heights associated with a pair of projected points and its near- and far-separation limits, and obtain the small-spacing asymptotic for the right nearest neighbour.
We then study recovery of the height profile from the projected second-order structure.
The resulting covariance transform uniquely determines every compactly supported finite positive height measure and, for separated finite unions of intervals, yields uniform Lipschitz recovery of all endpoints, even when the number of intervals is unknown but bounded.
\end{abstract}

\maketitle

\section{Introduction and main results}
\label{sec:introduction-main-results}

Peres and Vir\'ag proved that the zeros of the parameter-one hyperbolic Gaussian analytic function form a conformally invariant determinantal point process \cite[Theorem~1 and Corollary~4]{PeresVirag2005}; see \cite[Chapters~2--5]{HoughKrishnapurPeresVirag2009} for a systematic treatment.
Under a Cayley transformation, its upper-half-plane realization, which we refer to as the Hardy--Szeg\H{o} zero process, is stationary under horizontal translations and has Hermitian correlation kernel
\begin{equation}
\label{eq:hardy-szego-kernel}
K_{\mathbb H}(z,\zeta)
=
-\frac{1}{\pi(z-\overline\zeta)^2},
\qquad z,\zeta\in\mathbb H,
\end{equation}
with respect to planar Lebesgue measure.

We independently retain each point \(x+iy\) with a probability depending only on its height and then project the retained configuration by \(p(x+iy)=x\), so that the height coordinate is unobserved after projection.
This leads to two complementary questions.
The first is probabilistic: how are the unobserved height marks distributed under the reduced Palm law of the projected process, and how do they influence local spacings?
The second is an inverse problem: to what extent does the projected second-order structure determine the height profile?

The determinantal structure is used on the planar process before the height coordinates are integrated out.
No one-dimensional determinantal representation of the projected process is asserted.

\subsection{Main results}
\label{subsec:introduction-main-results}

Fix \(0<\alpha<\beta<\infty\), and define the class of compactly supported retention profiles
\[
\mathcal{W}_{\alpha,\beta}
=
\left\{
w:(0,\infty)\to[0,1]:
w \text{ is Borel},\
w(y)=0 \text{ for }y\notin[\alpha,\beta],\
\int_{\alpha}^{\beta}w(y)\,\mathrm{d}y>0
\right\}.
\]
Given \(w\in\mathcal{W}_{\alpha,\beta}\), we retain each zero \(z=x+i y\) independently with probability \(w(y)\).
Identifying \(\mathbb H\) with \(\mathbb R\times(0,\infty)\), we regard the second coordinate as the height mark and denote the retained marked point process by \(M_w\).
The process \(M_w\) is determinantal on \(\mathbb H\) with kernel
\[
K_w(z,\zeta)
=
\sqrt{w(\operatorname{Im}z)}
K_{\mathbb H}(z,\zeta)
\sqrt{w(\operatorname{Im}\zeta)},
\]
as recorded in Lemma~\ref{L:height-thinning-kernel}.

\paragraph{Projected process and second-order structure.}
Let \(p:\mathbb H\to\mathbb R\), \(p(x+iy)=x\), and define the projected process \(X_w=p_\#M_w\).
The process \(X_w\) is stationary, locally finite, and simple, with intensity
\[
\lambda_w
=
\frac{1}{4\pi}\int_0^\infty \frac{w(y)}{y^2}\,\mathrm dy;
\]
see Lemmas~\ref{L:projected-local-finiteness-stationarity} and \ref{L:projected-simplicity}.
Its factorial moment densities are obtained by integrating the planar determinantal densities over the height coordinates and admit bounded continuous representatives in every order; see Proposition~\ref{P:projected-factorial-intensities}.
Throughout, we use these representatives.
By stationarity, write \(\rho_w^{(2)}(r)=\rho_w^{(2)}(0,r)\) and define the second factorial cumulant density by \(g_w(r)=\rho_w^{(2)}(r)-\lambda_w^2\).
The key second-order identity, proved in Proposition \ref{P:projected-covariance-transform}, is
\[
g_w(r)
=
-\frac{1}{\pi^2}
\int_0^\infty\int_0^\infty
\frac{w(y)w(v)}
{\left(r^2+(y+v)^2\right)^2}
\,\mathrm dy\,\mathrm dv.
\]
Moreover, \(\rho_w^{(2)}(r)=\lambda_w^2+g_w(r)>0\) for every \(r\in\mathbb R\).

\paragraph{Palm disintegration and local spacing.}

We next identify the projected reduced Palm distribution.
Let \(\mathcal N_{\mathbb R}\) denote the space of locally finite simple counting measures on \(\mathbb R\), equipped with the Borel \(\sigma\)-field induced by the vague topology.
Write \(\mathbb P_w^{0,!}\) for the reduced Palm distribution of \(X_w\) at the origin and \(\mathbb E_w^{0,!}\) for its expectation.
Section~\ref{sec:projected-palm-disintegration} constructs a weakly continuous height-indexed version \(\mathsf P_{w,y}^{!}\) of the marked reduced Palm law of \(M_w\) at \(iy\); expectation with respect to \(\mathsf P_{w,y}^{!}\) is denoted by \(\mathbb E_{M_w}^{iy,!}\).
Define the probability measure
\[
\nu_w(\mathrm dy)
=
\frac{w(y)y^{-2}\,\mathrm dy}
{\displaystyle\int_0^\infty w(u)u^{-2}\,\mathrm du}.
\]

\begin{theorem}[Projected Palm disintegration]
\label{T:projected-palm-disintegration}
For every bounded measurable functional \(F:\mathcal N_{\mathbb R}\to\mathbb R\),
\[
\mathbb E_w^{0,!}[F(X_w)]
=
\int_0^\infty
\mathbb E_{M_w}^{iy,!}
\left[
F(p_\#M_w)
\right]
\,\nu_w(\mathrm dy).
\]
\end{theorem}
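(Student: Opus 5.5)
The plan is to identify both sides through the refined Campbell formula and then invoke uniqueness of the Palm kernel.

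Because \(X_w\) is stationary with intensity \(\lambda_w\) (Lemma~\ref{L:projected-local-finiteness-stationarity}), its reduced Palm law is characterized by the Campbell--Mecke identity
\[
\mathbb E\Bigl[\sum_{x\in X_w} h(x)F(X_w-\delta_x-x)\Bigr]
=\lambda_w\int_{\mathbb R}h(x)\,\mathbb E_w^{0,!}[F(X_w)]\,\mathrm dx
\]
for bounded measurable \(F\) and nonnegative \(h\in L^1(\mathbb R)\). Here \(X_w-x\) denotes the shifted configuration. It therefore suffices to show that the left-hand side also equals \(\lambda_w\int h(x)\,\mathrm dx\) times the right-hand side of the theorem.

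First, the left-hand side is a sum over points of \(M_w\), since \(p\) is injective on the configuration almost surely by simplicity (Lemma~\ref{L:projected-simplicity}). Writing \(\theta_x\) for horizontal translation by \(-x\), we have \(X_w-\delta_x-x=p_\#(\theta_x(M_w-\delta_{x+iy}))\) for each point \(x+iy\) of \(M_w\). We then apply the Campbell--Mecke formula for \(M_w\) itself. Its first intensity is \(K_w(z,z)=w(y)/(4\pi y^2)\) by \eqref{eq:hardy-szego-kernel} and Lemma~\ref{L:height-thinning-kernel}. The reduced Palm laws \(\mathsf P^{!}_{w,x+iy}\) are defined for almost every \(z\). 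This gives
\[
\int_{\mathbb R}\int_0^\infty h(x)\,\frac{w(y)}{4\pi y^2}\,\mathbb E_{M_w}^{x+iy,!}\bigl[F(p_\#\theta_xM_w)\bigr]\,\mathrm dy\,\mathrm dx .
\]
Horizontal stationarity of \(M_w\) holds because the kernel depends on \(z-\bar\zeta\) and on the heights only, and translation changes \(K_w\) by at most a unitary conjugation, which here is trivial. Together with the construction of the height-indexed version in Section~\ref{sec:projected-palm-disintegration}, this lets us replace \(\mathbb E^{x+iy,!}_{M_w}[F(p_\#\theta_x M_w)]\) by \(\mathbb E^{iy,!}_{M_w}[F(p_\#M_w)]\) for almost every \((x,y)\). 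Since \(\frac{w(y)}{4\pi y^2}\,\mathrm dy=\lambda_w\,\nu_w(\mathrm dy)\), the expression factors as \(\lambda_w\int h\cdot\int \mathbb E^{iy,!}_{M_w}[F(p_\#M_w)]\,\nu_w(\mathrm dy)\). Comparing with the first display and dividing by \(\lambda_w>0\) gives the theorem, after noting that \(y\mapsto\mathbb E^{iy,!}_{M_w}[F(p_\#M_w)]\) is measurable. Measurability follows from weak continuity for continuous \(F\), and extends to all bounded measurable \(F\) by a monotone class argument.

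The main obstacle is the translation step. The marked Palm kernel is a priori defined only up to null sets, so we must justify that the chosen version satisfies \(\mathsf P^{!}_{w,x+iy}\circ\theta_x^{-1}=\mathsf P^{!}_{w,y}\) for almost every \((x,y)\), and that \(F\circ p_\#\) is measurable on the planar configuration space. \(F\circ p_\#\) could fail to be well defined on configurations whose projection is not locally finite, but this is a null event by Lemma~\ref{L:projected-local-finiteness-stationarity}. For the invariance, I would use the determinantal description. The reduced Palm process of a determinantal process at \(z\) is determinantal with kernel \(K_w(\zeta,\eta)-K_w(\zeta,z)K_w(z,\eta)/K_w(z,z)\). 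This kernel is manifestly covariant under horizontal shifts, so the identity holds for every \(y\) with \(w(y)>0\) and not merely almost everywhere. This also identifies the weakly continuous version with the genuine Palm kernel, which closes the argument.
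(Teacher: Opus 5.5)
Your proposal is correct and follows the paper's proof essentially step for step: rewrite the projected Campbell sum as a sum over \(M_w\) using simplicity, apply the marked Campbell--Mecke formula with \(K_w(z,z)\,\mathrm dA(z)=\lambda_w\,\mathrm dx\,\nu_w(\mathrm dy)\), remove the \(x\)-dependence via the horizontally covariant determinantal Palm version from Shirai--Takahashi, and divide by \(\lambda_w\). One minor correction: \(p_\#\) is not vaguely continuous on \(\mathcal N_{\mathbb H}\), so measurability in \(y\) should come from the probability-kernel property of \(y\mapsto\mathsf P_{w,y}^{!}\) applied to the zero-extended measurable functional \(F\circ p_\#\), rather than from weak continuity applied to continuous \(F\); Lemma~\ref{L:canonical-palm-projection}, not only the local-finiteness lemma for \(X_w\), is what makes this extension harmless under each \(\mathsf P_{w,y}^{!}\).
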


Section~\ref{sec:projected-palm-disintegration} realizes this identity on a joint disintegration space.
On that space, the unobserved height mark of the removed Palm point has distribution \(\nu_w\), and, conditional on height \(y\), the remaining marked configuration has law \(\mathsf P_{w,y}^{!}\).
The height mark is not asserted to be a measurable function of the projected reduced Palm configuration.

The second factorial Campbell measure gives the corresponding two-height description.
For \(r\in\mathbb R\), define the probability measure
\[
\mathsf Q_{w,r}(\mathrm dy_0\,\mathrm dy_1)
=
\frac{w(y_0)w(y_1)}{\rho_w^{(2)}(r)}
\left[
\frac{1}{16\pi^2y_0^2y_1^2}
-
\frac{1}{\pi^2\left(r^2+(y_0+y_1)^2\right)^2}
\right]
\,\mathrm dy_0\,\mathrm dy_1.
\]
The next proposition identifies the conditional law of the two height marks given their horizontal displacement.

\begin{proposition}
\label{P:hidden-two-mark-law}
For every nonnegative measurable function \(H:\mathbb R^2\times(0,\infty)^2\to[0,\infty]\),
\[
\begin{aligned}
&\mathbb E\sum_{z_0,z_1\in M_w}^{\ne}
H\bigl(\operatorname{Re}z_0,
       \operatorname{Re}z_1-\operatorname{Re}z_0,
       \operatorname{Im}z_0,\operatorname{Im}z_1\bigr)
\\
&\quad=
\int_{\mathbb R}\int_{\mathbb R}
\rho_w^{(2)}(r)
\int_{(0,\infty)^2}
H(x,r,y_0,y_1)
\,\mathsf Q_{w,r}(\mathrm dy_0\,\mathrm dy_1)
\,\mathrm dr\,\mathrm dx.
\end{aligned}
\]
Moreover, \(r\mapsto\mathsf Q_{w,r}\) is continuous in total variation and is the unique weakly continuous version of the conditional height kernel in this disintegration.
\end{proposition}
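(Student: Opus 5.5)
The plan is to start from the second factorial moment measure of the planar determinantal process $M_w$. By Lemma~\ref{L:height-thinning-kernel}, $M_w$ is determinantal with kernel $K_w$, so its second correlation function with respect to planar Lebesgue measure is
\[
\rho^{(2)}_{M_w}(z_0,z_1)=K_w(z_0,z_0)K_w(z_1,z_1)-|K_w(z_0,z_1)|^2 .
\]
From \eqref{eq:hardy-szego-kernel} we have $K_{\mathbb H}(z,z)=1/(4\pi y^2)$. For $z_0=x_0+iy_0$ and $z_1=x_1+iy_1$ we also have $|z_0-\overline{z_1}|^2=(x_0-x_1)^2+(y_0+y_1)^2$. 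Hence
\[
\rho^{(2)}_{M_w}(z_0,z_1)=w(y_0)w(y_1)\left[\frac{1}{16\pi^2y_0^2y_1^2}-\frac{1}{\pi^2\left((x_1-x_0)^2+(y_0+y_1)^2\right)^2}\right].
\]
This density depends on $x_0,x_1$ only through $r=x_1-x_0$, and it is nonnegative.

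The second factorial Campbell formula for $M_w$, valid for nonnegative measurable integrands by monotone convergence, expresses the left-hand side of the proposition as $\int\!\!\int H(x_0,x_1-x_0,y_0,y_1)\,\rho^{(2)}_{M_w}\,\mathrm dx_0\,\mathrm dy_0\,\mathrm dx_1\,\mathrm dy_1$. I would then change variables $(x_0,x_1)\mapsto(x,r)=(x_0,x_1-x_0)$, which has unit Jacobian, and apply Tonelli. The inner height integral equals $\rho_w^{(2)}(r)$ times the $\mathsf Q_{w,r}$-integral of $H$. For this I need two facts:
\begin{itemize}
\item $\int\!\!\int \rho^{(2)}_{M_w}\,\mathrm dy_0\,\mathrm dy_1=\rho_w^{(2)}(r)$. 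This is exactly how the projected density is defined, combining Proposition~\ref{P:projected-factorial-intensities} with $\lambda_w^2$ and the formula for $g_w$ from Proposition~\ref{P:projected-covariance-transform}.
\item $\rho_w^{(2)}(r)>0$ for every $r$, which was stated just before the proposition.
\end{itemize}
Together these show that $\mathsf Q_{w,r}$ is a well-defined probability measure and that the displayed identity holds.

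For continuity in total variation, I would write $\mathsf Q_{w,r}$ as a density $q_r(y_0,y_1)$ on $[\alpha,\beta]^2$. The bracket is continuous in $r$ and bounded by $1/(16\pi^2\alpha^4)+1/(16\pi^2\alpha^4)$ on the support. The denominator $\rho_w^{(2)}(r)$ is continuous and positive, since the integrand of $g_w$ is dominated. Dominated convergence then gives $\|\mathsf Q_{w,r}-\mathsf Q_{w,r'}\|_{TV}=\int|q_r-q_{r'}|\to0$ as $r'\to r$.

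Uniqueness is the step needing care, since disintegrations are a priori unique only almost everywhere. Suppose $\widetilde{\mathsf Q}_{w,r}$ is another weakly continuous kernel satisfying the identity for all $H$. I would take $H(x,r,y_0,y_1)=a(x)b(r)\varphi(y_0,y_1)$ with $a,b$ nonnegative and compactly supported and $\varphi$ bounded continuous. Comparing the two sides shows that $\rho_w^{(2)}(r)\int\varphi\,\mathrm d\mathsf Q_{w,r}=\rho_w^{(2)}(r)\int\varphi\,\mathrm d\widetilde{\mathsf Q}_{w,r}$ for Lebesgue-a.e.\ $r$. Because $\rho_w^{(2)}>0$, the two integrals agree a.e. Both sides are continuous in $r$, so they agree for every $r$. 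Running over a countable convergence-determining family of $\varphi$ gives $\widetilde{\mathsf Q}_{w,r}=\mathsf Q_{w,r}$ for all $r$.

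I expect this last step to be the main obstacle, mainly in setting up the argument cleanly: identifying "version of the conditional kernel" with an a.e. statement against $\rho_w^{(2)}(r)\,\mathrm dr$, and using strict positivity of $\rho_w^{(2)}$ to turn that measure-class statement into agreement everywhere.
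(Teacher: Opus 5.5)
Your proposal is correct and follows essentially the same route as the paper: the explicit two-point determinantal density of $M_w$, the second factorial moment formula with the unit-Jacobian change of variables $(x_0,x_1)\mapsto(x,r)$, normalization by $\rho_w^{(2)}(r)>0$, dominated convergence on $[\alpha,\beta]^2$ for total-variation continuity, and almost-everywhere uniqueness upgraded to every $r$ using positivity of $\rho_w^{(2)}$ and weak continuity. The differences are cosmetic: you normalize at the density level instead of first proving total-variation continuity of the unnormalized measures $\rho_w^{(2)}(r)\mathsf Q_{w,r}$, and you make the almost-everywhere uniqueness explicit with product test functions $a(x)b(r)\varphi(y_0,y_1)$ rather than citing it. In that step, take $\varphi$ nonnegative, or shift it by a constant, so that $H$ stays within the nonnegative class allowed by the statement.
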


The conditional height kernel has the following two limiting regimes in total variation, as shown in Corollary~\ref{C:far-hidden-height-limit}:
\[
\mathsf Q_{w,r}\longrightarrow\nu_w\otimes\nu_w
\quad\text{as }|r|\to\infty,
\qquad
\mathsf Q_{w,r}\longrightarrow\mathsf Q_{w,0}
\quad\text{as }r\to0.
\]
Thus the height marks of widely separated projected points are asymptotically independent, whereas the density of \(\mathsf Q_{w,0}\) vanishes on the diagonal \(y_0=y_1\).

We next consider the small-spacing behavior of the right nearest-neighbour distance.

\begin{theorem}
\label{T:linear-right-short-gap}
Under \(\mathbb P_w^{0,!}\), let \(S_w:=\inf(\operatorname{supp}X_w\cap(0,\infty))\), with \(\inf\varnothing:=\infty\).
Then, as \(t\downarrow0\),
\[
\mathbb P_w^{0,!}\{S_w\leq t\}
=
\frac{\rho_w^{(2)}(0)}{\lambda_w}\,t+O_w(t^2).
\]
The implicit constant in \(O_w(\,\cdot\,)\) may depend on \(w\).
In particular, the leading coefficient is strictly positive.
\end{theorem}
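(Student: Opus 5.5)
The plan is a first-moment upper bound combined with a second-moment (Bonferroni) lower bound, both computed under the reduced Palm law through the projected factorial moment densities. Put \(N_t:=X_w((0,t])\). Then \(\{S_w\le t\}=\{N_t\ge1\}\), and
\[
\mathbb E_w^{0,!}[N_t]-\tfrac12\mathbb E_w^{0,!}[N_t(N_t-1)]
\le
\mathbb P_w^{0,!}\{N_t\ge1\}
\le
\mathbb E_w^{0,!}[N_t].
\]
The left inequality is the elementary bound \(\mathbf 1\{n\ge1\}\ge n-\binom n2\) for integers \(n\ge0\). I also need \(\inf\) to be attained, i.e.\ \(S_w\le t\) iff \(N_t\ge1\). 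This holds because \(X_w\) is locally finite and simple (Lemmas~\ref{L:projected-local-finiteness-stationarity} and \ref{L:projected-simplicity}), and because under the reduced Palm law there is a.s.\ no point at \(0\). That last fact follows since \(\rho^{(2)}_w\) is bounded, so \(\mathbb E^{0,!}_w X_w(\{0\})=0\).

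The two moments come from the Palm relations for factorial moment densities of the stationary process \(X_w\). By the reduced Campbell--Mecke formula, the reduced Palm process has \(k\)-th factorial moment density \(\rho_w^{(k+1)}(0,x_1,\dots,x_k)/\lambda_w\). Proposition~\ref{P:projected-factorial-intensities} supplies bounded continuous representatives, so these identities hold almost everywhere, and that is all an integral over \((0,t]\) requires. Consequently
\[
\mathbb E_w^{0,!}[N_t]=\frac1{\lambda_w}\int_0^t\rho^{(2)}_w(r)\,\mathrm dr,\qquad
\mathbb E_w^{0,!}[N_t(N_t-1)]=\frac1{\lambda_w}\int_0^t\!\!\int_0^t\rho^{(3)}_w(0,r,s)\,\mathrm dr\,\mathrm ds\le \frac{\|\rho^{(3)}_w\|_\infty}{\lambda_w}t^2.
\]
Together with the sandwich, this shows that the second-order term contributes only \(O_w(t^2)\).

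It remains to expand the first moment. By Proposition~\ref{P:projected-covariance-transform}, \(\rho^{(2)}_w(r)=\lambda_w^2+g_w(r)\). The function \(g_w\) is even and smooth in \(r\). Indeed, \(w\) is supported in \([\alpha,\beta]\), so \(y+v\ge2\alpha>0\) and the integrand \((r^2+(y+v)^2)^{-2}\) is \(C^1\) in \(r\), with \(|\partial_r(\cdot)|\le 4|r|(2\alpha)^{-6}\) uniformly. Differentiating under the integral therefore gives \(|g_w(r)-g_w(0)|\le C_w r^2\), and in particular \(|\rho^{(2)}_w(r)-\rho^{(2)}_w(0)|\le C_w r\) suffices. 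Hence
\[
\int_0^t\rho^{(2)}_w(r)\,\mathrm dr=\rho_w^{(2)}(0)t+O_w(t^2),
\]
which yields the stated asymptotic. Strict positivity of the leading coefficient is exactly the already stated fact \(\rho^{(2)}_w(0)>0\), together with \(\lambda_w>0\), and the latter holds because \(\int w>0\).

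The main obstacle is justifying the Palm moment formula for \(\rho^{(3)}\) and the boundedness of \(\rho^{(3)}_w\). For boundedness, Proposition~\ref{P:projected-factorial-intensities} gives it, or directly via Hadamard's inequality: \(\det[K_w(z_i,z_j)]\le\prod_i K_w(z_i,z_i)=\prod_i w(y_i)/(4\pi y_i^2)\), and integrating over heights gives \(\rho^{(3)}_w\le\lambda_w^3\). For the Palm formula, I would derive it from the defining Campbell identity of \(\mathbb P_w^{0,!}\) using stationarity. An alternative route is Theorem~\ref{T:projected-palm-disintegration}, integrating the marked Palm correlation densities over the heights; this route also reproduces \(\rho^{(3)}\) after normalisation by \(\nu_w\).
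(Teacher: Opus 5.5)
Your proposal is correct and follows the paper's proof essentially step for step. It uses the same Bonferroni sandwich for \(N_t\), the Palm intensity \(\rho_w^{(2)}/\lambda_w\) for the first moment, and the Palm factorial-moment formula with \(\rho_w^{(3)}\le\lambda_w^3\) to bound the second factorial moment by \(\lambda_w^2t^2\). The differences are cosmetic: you take \((0,t]\) instead of \((0,t)\), which removes the need for the absence of an atom at \(t\), and you use a Lipschitz rather than \(C^2\) expansion of \(\rho_w^{(2)}\), which is all the \(O_w(t^2)\) error requires.
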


\paragraph{Identifiability and stable reconstruction.}

We next turn to the deterministic inverse problem.
Let \(\mu\) be a finite positive measure supported in a compact subset of \((0,\infty)\), and define
\begin{equation}
\label{eq:height-measure-covariance-transform}
G_\mu(r)
=
\int_0^\infty\int_0^\infty
\frac{\mu(\mathrm dy)\,\mu(\mathrm dv)}
{\bigl(r^2+(y+v)^2\bigr)^2},
\qquad r>0.
\end{equation}
For the probabilistic profile \(\mu_w(\mathrm dy)=w(y)\,\mathrm dy\), the second factorial cumulant density satisfies \(g_w(r)=-\pi^{-2}G_{\mu_w}(r)\).

\begin{theorem}[Stieltjes--Laplace identifiability]
\label{T:stieltjes-laplace-identifiability}
Let \(\mu\) and \(\widetilde\mu\) be finite positive measures supported in a common compact subset of \((0,\infty)\).
If \(G_\mu(r)=G_{\widetilde\mu}(r)\) on a nonempty open subinterval of \((0,\infty)\), then \(\mu=\widetilde\mu\).
\end{theorem}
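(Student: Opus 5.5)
The approach is to pass from \(G_\mu\) to the pushforward of \(\mu\otimes\mu\) under addition, and then to a Laplace transform. Write \(\sigma=(\mu\otimes\mu)\circ s^{-1}\), where \(s(y,v)=y+v\). Then \(\sigma\) is a finite positive measure supported in \([2a,2b]\subset(0,\infty)\) whenever \(\mu\) is supported in \([a,b]\), and
\[
G_\mu(r)=\int_{[2a,2b]}\frac{\sigma(\mathrm ds)}{(r^2+s^2)^2}.
\]
Because \(\sigma\) has compact support away from \(0\), this expression extends holomorphically in \(r\) to a neighbourhood of \((0,\infty)\), indeed to \(\{r:\operatorname{Re}r>0\}\) minus nothing problematic. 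More precisely, the map \(r\mapsto G_\mu(r)\) is real analytic on \((0,\infty)\), since \(r^2+s^2\) has no zeros near the positive axis. Equality on an open subinterval therefore propagates to all of \((0,\infty)\) by analytic continuation. It is convenient to set \(t=r^2\). The function \(t\mapsto\int\sigma(\mathrm ds)(t+s^2)^{-2}\) is analytic in \(t\in\mathbb C\setminus(-\infty,-4a^2]\), so equality holds for every \(t>0\).

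Next I recover \(\sigma\). With \(\tau\) the pushforward of \(\sigma\) under \(s\mapsto s^2\), we have \(\int\tau(\mathrm du)(t+u)^{-2}=\int_0^\infty e^{-tx}\,x\,\widehat\tau(x)\,\mathrm dx\), where \(\widehat\tau(x)=\int e^{-xu}\,\tau(\mathrm du)\). This follows from \((t+u)^{-2}=\int_0^\infty x e^{-(t+u)x}\,\mathrm dx\) and Fubini. Equality for all \(t>0\) then gives, by uniqueness of the Laplace transform of the continuous function \(x\widehat\tau(x)\), that \(\widehat\tau=\widehat{\widetilde\tau}\) on \((0,\infty)\). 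A second application of Laplace uniqueness for finite measures on \([0,\infty)\) gives \(\tau=\widetilde\tau\), and hence \(\sigma=\widetilde\sigma\), since \(s\mapsto s^2\) is injective on \((0,\infty)\).

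The remaining step, which I expect to be the main obstacle, is the deconvolution. We know \(\mu*\mu=\widetilde\mu*\widetilde\mu\), and we need \(\mu=\widetilde\mu\). I would use the two-sided Laplace transform \(L_\mu(z)=\int e^{-zy}\,\mu(\mathrm dy)\). It is entire because the support is compact, and \(\sigma=\widetilde\sigma\) gives \(L_\mu^2=L_{\widetilde\mu}^2\) on \(\mathbb C\). Then \((L_\mu-L_{\widetilde\mu})(L_\mu+L_{\widetilde\mu})\equiv0\), and since the ring of entire functions is an integral domain, one factor vanishes identically. The second factor cannot: for real \(z\) it equals \(\int e^{-zy}(\mu+\widetilde\mu)(\mathrm dy)\), which is strictly positive unless \(\mu=\widetilde\mu=0\), a trivial case. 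Hence \(L_\mu\equiv L_{\widetilde\mu}\), and uniqueness of the Laplace (equivalently Fourier, setting \(z=-i\xi\)) transform of finite compactly supported measures yields \(\mu=\widetilde\mu\). The sign ambiguity in taking square roots is the only delicate point, and positivity of the measures resolves it. I also need to check that the common compact support is used only to guarantee analyticity and finiteness, so the two measures may have different total masses a priori.
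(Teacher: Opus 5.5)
Your proof is correct and follows the same scheme as the paper: analytic continuation to all $r>0$, recovery of $\mu\ast\mu$ from a Stieltjes-type transform, and then Laplace transforms with positivity fixing the square root. The differences are local. The paper integrates $G_\mu(\sqrt u)$ over $u\in[t,\infty)$ to get the genuine Stieltjes transform $\int\nu(\mathrm ds)/(t+s^2)$ and cites Stieltjes uniqueness, where you expand $(t+u)^{-2}=\int_0^\infty xe^{-(t+u)x}\,\mathrm dx$ and apply Laplace uniqueness twice. In the deconvolution step the paper simply notes that both Laplace transforms are nonnegative for real $z>0$, so equal squares give equality pointwise, and your integral-domain argument over $\mathbb C$ is not needed.
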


For finite-union profiles, let
\[
I(\vartheta)
=
\bigcup_{j=1}^{m}[a_j,b_j],
\qquad
0<a_1<b_1<\cdots<a_m<b_m<\infty,
\]
and set \(\mu_\vartheta(\mathrm dy) =\mathbf 1_{I(\vartheta)}(y)\,\mathrm dy\).
Fix \(m\geq1\), \(0<\alpha<\beta<\infty\), \(\delta>0\), and \(0<r_-<r_+<\infty\), and define
\[
\Theta_{m,\alpha,\beta,\delta}
=
\left\{
\vartheta:
\begin{array}{l}
\alpha\leq a_1,\quad b_m\leq\beta,\\
b_j-a_j\geq\delta\quad (1\leq j\leq m),\\
a_{j+1}-b_j\geq\delta\quad (1\leq j<m)
\end{array}
\right\}.
\]
This compact class is nonempty precisely when
\(
(2m-1)\delta\leq\beta-\alpha.
\)

\begin{theorem}[Lipschitz stability of endpoint recovery]
\label{T:finite-union-stability}
Assume \((2m-1)\delta\leq\beta-\alpha\).
There exists \(C<\infty\), depending only on \(m,\alpha,\beta,\delta,r_-\), and \(r_+\), such that
\[
\|\vartheta-\widetilde\vartheta\|_{\mathbb R^{2m}}
\leq
C
\sup_{r\in[r_-,r_+]}
\left|
G_{\mu_\vartheta}(r)-G_{\mu_{\widetilde\vartheta}}(r)
\right|
\]
for all \(\vartheta,\widetilde\vartheta\in \Theta_{m,\alpha,\beta,\delta}\), where \(\|\cdot\|_{\mathbb R^{2m}}\) denotes the Euclidean norm.
\end{theorem}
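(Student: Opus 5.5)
The argument is the standard compactness route to Lipschitz stability, built on two ingredients: injectivity on \(\Theta:=\Theta_{m,\alpha,\beta,\delta}\), and nondegeneracy of the derivative of \(\vartheta\mapsto G_{\mu_\vartheta}|_{[r_-,r_+]}\).

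\textbf{Smoothness and derivative.} Define \(\Phi:\Theta\to C([r_-,r_+])\) by \(\Phi(\vartheta)=G_{\mu_\vartheta}\). Expanding
\[
G_{\mu_\vartheta}(r)=\sum_{j,k}\int_{a_j}^{b_j}\int_{a_k}^{b_k}\frac{\mathrm dy\,\mathrm dv}{(r^2+(y+v)^2)^2},
\]
the kernel is real-analytic and bounded with all derivatives on \([r_-,r_+]\times[\alpha,\beta]^2\). Hence \(\Phi\) extends to a \(C^2\) map on a neighbourhood of \(\Theta\) in \(\mathbb R^{2m}\), uniformly in \(r\). Its partial derivatives are
\[
\partial_{b_j}G=2\int_{I(\vartheta)}k_r(b_j+v)\,\mathrm dv,\qquad \partial_{a_j}G=-2\int_{I(\vartheta)}k_r(a_j+v)\,\mathrm dv,
\qquad k_r(s):=(r^2+s^2)^{-2}.
\]
Writing \(K_\vartheta(s):=\int_{I(\vartheta)}k_r(s+v)\,\mathrm dv\), the derivative in direction \(h\in\mathbb R^{2m}\) is
\[
D\Phi(\vartheta)h=2\sum_j\bigl(h_{b_j}K_\vartheta(b_j)-h_{a_j}K_\vartheta(a_j)\bigr).
\]
Since \(D\Phi\) is continuous in \(\vartheta\) and \(\Theta\) is compact, it suffices to prove two things: (i) \(\Phi\) is injective on \(\Theta\), and (ii) \(D\Phi(\vartheta)\) is injective for each \(\vartheta\in\Theta\). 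Given these, the standard argument finishes the proof. Suppose the Lipschitz inequality failed. Take sequences \(\vartheta_n\neq\widetilde\vartheta_n\) with \(\|\Phi(\vartheta_n)-\Phi(\widetilde\vartheta_n)\|_\infty/\|\vartheta_n-\widetilde\vartheta_n\|\to0\), and extract convergent subsequences. If the limits differ, (i) is contradicted. If they coincide at \(\vartheta_*\), Taylor expansion with the uniform \(C^2\) bound gives \(D\Phi(\vartheta_*)h=0\) for a unit limit direction \(h\), contradicting (ii). Convexity is not an issue here, because the Taylor expansion is carried out in the ambient neighbourhood.

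\textbf{Injectivity (i).} This follows directly from Theorem~\ref{T:stieltjes-laplace-identifiability}. Equality of \(G\) on \([r_-,r_+]\) gives \(\mu_\vartheta=\mu_{\widetilde\vartheta}\). The \(\delta\)-separation forces the interval decomposition of a set, and hence its endpoints, to be unique, so \(\vartheta=\widetilde\vartheta\).

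\textbf{Derivative nondegeneracy (ii).} This is the main obstacle. We must show that the functions \(r\mapsto K_\vartheta(c)\), for the \(2m\) distinct endpoints \(c\in\{a_j,b_j\}\), are linearly independent on \([r_-,r_+]\). Write
\[
K_\vartheta(c)(r)=\int k_r(c+v)\,\mu_\vartheta(\mathrm dv).
\]
I would reuse the mechanism behind Theorem~\ref{T:stieltjes-laplace-identifiability}. First, by analyticity in \(r\), a vanishing combination on the interval vanishes for all \(r>0\), and indeed for complex \(r\) off the singular set. Then use the representation
\[
(r^2+s^2)^{-2}=\int_0^\infty \frac{t\sin(st)}{2s}\,e^{-rt}\,\mathrm dt,
\]
or equivalently a Laplace/Stieltjes representation in \(r^2\). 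A vanishing combination \(\sum_c h_c K(c)\equiv0\) then becomes
\[
\int \Bigl(\sum_c h_c\,\delta_c * \mu_\vartheta\Bigr)(\mathrm ds)\,k_r(s)\equiv0,
\]
that is, \(G\)-type identifiability for the signed measure \(\sigma=\bigl(\sum_c h_c\delta_c\bigr)*\mu_\vartheta\) on \([2\alpha,2\beta]\). The plan is to show that the transform \(\int k_r(s)\,\sigma(\mathrm ds)\) determines a compactly supported signed measure \(\sigma\) on \((0,\infty)\). The singularities at \(r=\pm is\) should suffice: continuing analytically to \(r=i\tau\), the function has a pole of order \(2\) at each atom and integrable singularities along the support, so the jump across the imaginary axis recovers the density of \(\sigma\). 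This forces \(\sigma=0\). Finally, take Fourier/Laplace transforms: \(\widehat{\sigma}=\bigl(\sum_c h_c e^{-c\xi}\bigr)\widehat{\mu_\vartheta}\), and \(\widehat{\mu_\vartheta}\) is a nonzero entire function. Hence the exponential polynomial \(\sum_c h_c e^{-c\xi}\) with distinct exponents vanishes identically, so \(h=0\). If the identifiability proof in Theorem~\ref{T:stieltjes-laplace-identifiability} is already written for signed measures, I would cite it directly in place of the jump argument.
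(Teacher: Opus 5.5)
Your proposal is correct and follows the paper's route: smoothness of the covariance map near \(\Theta\); injectivity from Theorem~\ref{T:stieltjes-laplace-identifiability}; injectivity of the derivative by writing it as the \(k_r\)-transform of \(\sigma=\bigl(\sum_c h_c\delta_c\bigr)\ast\mu_\vartheta\) and then factoring the Laplace transform; and a compactness argument, where your sequential contradiction is equivalent to the paper's explicit constants \(\eta\), \(M_2\), \(\varepsilon_0\), \(d_0\). For the step you flag as the obstacle, the jump sketch is loose: the singularities along the support are of second order rather than integrable, and your sine--Laplace integral actually equals \(r(r^2+s^2)^{-2}\). Close that step as the paper does. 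Integrate the vanishing identity in \(u=r^2\), using \(\int_t^\infty(u+s^2)^{-2}\,\mathrm du=(t+s^2)^{-1}\), to get \(\int(t+s^2)^{-1}\,\sigma(\mathrm ds)=0\) for all \(t>0\). Then Lemma~\ref{L:stieltjes-uniqueness-compact}, which is already stated for signed measures, applied to the pushforward of \(\sigma\) under \(s\mapsto s^2\), gives \(\sigma=0\).
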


The number of intervals can also be recovered stably when it is unknown but bounded.
Corollary~\ref{C:stable-recovery-of-m} shows that the covariance images of distinct admissible parameter classes are separated by a positive distance; once \(m\) is identified, Theorem~\ref{T:finite-union-stability} gives the endpoint estimate.

\subsection{Background and related work}
\label{subsec:introduction-background}

Determinantal point processes originate in the work of Macchi \cite{Macchi1975}; general constructions, correlation functions, and operator criteria are developed in \cite{HoughKrishnapurPeresVirag2006,Soshnikov2000}; for determinantal probability measures on countable spaces, see \cite{Lyons2003}.
The determinantal description of the parameter-one hyperbolic zero process is due to Peres and Vir\'ag \cite{PeresVirag2005}; the Gaussian analytic function and zero-process background is treated systematically in \cite{HoughKrishnapurPeresVirag2009}.
Coordinate projections of determinantal point processes were studied in \cite{MazoyerCoeurjollyAmblard2020}; unlike that bounded-product setting, our noncompact projection is used for Palm, spacing, and inverse questions.

For indicator retention profiles \(w=\mathbf 1_I\), the projected process and its long-window fluctuations are studied in \cite{AiGuoZhou2026}, where the covariance density is computed and Brillinger mixing and a functional central limit theorem are established.
In this case, Proposition~\ref{P:projected-covariance-transform} agrees with the covariance formula in \cite{AiGuoZhou2026}.
The present paper treats general compactly supported Borel retention profiles and addresses the different questions of projected Palm disintegration, local spacing, and recovery of the height profile.

Factorial moment measures and marked point processes are used in the standard framework of \cite{DaleyVereJones2003}.
For Palm distributions, Campbell--Mecke identities, disintegration on configuration spaces, and the standard mark-averaging relation for the reduced Palm law of the ground process, see \cite[Chapter~13]{DaleyVereJones2008} and \cite{Kallenberg2017}; see also \cite{BaccelliBremaud1987} for stationary marked Palm theory.
For determinantal Palm kernels, see \cite{ShiraiTakahashi2003}.
The additional content here is the explicit height mixing law and the weakly continuous determinantal height-indexed Palm version used below.
For independent location-dependent thinning of determinantal processes, see \cite{LavancierMollerRubak2015}.

The inverse argument uses classical uniqueness principles for Stieltjes and Laplace transforms; see \cite{Widder1941}.
For finite-union profiles, multiplying the recovered Laplace transform by \(t\) gives a finite signed exponential sum, linking endpoint recovery to Prony-type inversion.
Prony mappings and their collision singularities are studied in \cite{BatenkovYomdin2014}, while numerical recovery of exponential sums from noisy samples is treated in \cite{PottsTasche2010}.
These works provide context rather than the stability estimate proved here: our Lipschitz stability estimate is proved directly for the covariance map in \(C([r_-,r_+])\).

The inverse results below assume that the projected covariance transform is known exactly.
Estimating that transform from a single realization of the projected process would require additional statistical arguments and is not considered here.

\subsection{Proof strategy}
\label{subsec:introduction-proof-strategy}

Integrating the planar one-point intensity over the height variable gives the projected intensity and local finiteness, while horizontal translation invariance gives stationarity.
Projected simplicity is proved by bounding the expected number of pairs in a bounded interval whose horizontal coordinates differ by less than \(\varepsilon\).
For higher factorial moments, the planar determinantal formula, Fubini's theorem, Hadamard's inequality, and dominated convergence yield the bounded continuous densities in Proposition~\ref{P:projected-factorial-intensities}; the two-point case gives the explicit covariance transform in Proposition~\ref{P:projected-covariance-transform}.

For Theorem~\ref{T:projected-palm-disintegration}, we compare the marked and projected Campbell measures.
The resulting disintegration identifies the mixing measure \(\nu_w\), while the explicit reduced determinantal kernel provides a weakly continuous height-indexed Palm version.
The second factorial Campbell measure yields the conditional two-height kernel in Proposition~\ref{P:hidden-two-mark-law}; total-variation continuity and uniqueness of disintegration determine its weakly continuous version for every horizontal separation.
The first two factorial moments under the projected reduced Palm law, together with the Bonferroni inequalities, then yield Theorem~\ref{T:linear-right-short-gap}.

To prove Theorem~\ref{T:stieltjes-laplace-identifiability}, write \(\nu=\mu\ast\mu\).
Integrating \(G_\mu(\sqrt{u})\) with respect to \(u\) yields the Stieltjes transform of the pushforward of \(\nu\) under \(s\mapsto s^2\).
Stieltjes uniqueness recovers \(\nu\); taking Laplace transforms of \(\nu=\mu\ast\mu\) and using positivity then determines \(\mu\).
On separated finite-union classes, injectivity of the differential and a uniform second-order Taylor estimate give a local lower Lipschitz bound; compactness and injectivity of the covariance map extend it to the global stability estimate in Theorem~\ref{T:finite-union-stability}.

\medskip
\noindent\textbf{Organization of the paper.}
Section \ref{sec:height-selected-projected-process} constructs the process obtained by height-dependent thinning and derives the correlation structure of its projection.
Section \ref{sec:projected-palm-disintegration} proves the Palm disintegration, the conditional height distributions, and the right small-spacing asymptotic.
Section \ref{sec:stieltjes-laplace-identifiability} proves identifiability and stable reconstruction for finite unions.

\section{Projected process and correlation structure}
\label{sec:height-selected-projected-process}

This section introduces height-dependent thinning and establishes local finiteness, stationarity, simplicity, and factorial moment densities for the horizontal projection.

\subsection{Height-dependent thinning and projection}
\label{subsec:model-hardy-szego}

Recall that the upper-half-plane Hardy--Szeg\H{o} zero process \(Z_{\mathbb H}\) is a determinantal point process with Hermitian kernel \eqref{eq:hardy-szego-kernel} relative to planar Lebesgue measure \(\mathrm dA\).
In particular, \(K_{\mathbb H}(x+iy,x+iy)=1/(4\pi y^2)\).
For every integer \(k\ge1\), the \(k\)-th factorial moment measure of \(Z_{\mathbb H}\) has density, with respect to \(\mathrm dA^{\otimes k}\),
\begin{equation}
\label{eq:hardy-szego-factorial-density}
\rho_{\mathbb H}^{(k)}(z_1,\ldots,z_k)
=
\det[K_{\mathbb H}(z_i,z_j)]_{i,j=1}^k.
\end{equation}
Hadamard's inequality gives
\begin{equation}
\label{eq:hardy-szego-hadamard-bound}
0
\le
\rho_{\mathbb H}^{(k)}(z_1,\ldots,z_k)
\le
\prod_{j=1}^k K_{\mathbb H}(z_j,z_j)
=
\prod_{j=1}^k
\frac{1}{4\pi(\operatorname{Im}z_j)^2}.
\end{equation}
The kernel satisfies \(K_{\mathbb H}(z+t,\zeta+t)=K_{\mathbb H}(z,\zeta)\) for \(t\in\mathbb R\), and \(Z_{\mathbb H}\) is horizontally stationary.

\paragraph{Admissible retention profiles.}
\label{subsec:height-selection-class}

Recall that, for fixed \(0<\alpha<\beta<\infty\), \(\mathcal W_{\alpha,\beta}\) denotes the class of Borel functions \(w:(0,\infty)\to[0,1]\) supported in \([\alpha,\beta]\) and satisfying
\[
\int_\alpha^\beta w(y)\,\mathrm dy>0.
\]
For every \(w\in\mathcal W_{\alpha,\beta}\),
\[
0<\int_0^\infty w(y)y^{-2}\,\mathrm dy<\infty.
\]
A compact interval \(I\subset[\alpha,\beta]\) of positive length gives the deterministic height-window profile \(w=\mathbf 1_I\).

\paragraph{Height-dependent thinning.}
\label{subsec:height-thinning}

Fix \(w\in\mathcal W_{\alpha,\beta}\).
Recall that \(M_w\) is obtained from \(Z_{\mathbb H}\) by independently retaining each point \(z=x+iy\) with probability \(w(y)\), with \(y\) regarded as the height mark under the identification \(\mathbb H=\mathbb R\times(0,\infty)\).
For independent thinning of determinantal point processes with spatially varying retention probabilities, see, e.g., \cite{LavancierMollerRubak2015}.
The following lemma identifies the determinantal kernel of \(M_w\).

\begin{lemma}
\label{L:height-thinning-kernel}
The process \(M_w\) is determinantal on \(\mathbb H\), relative to \(\mathrm dA\), with kernel
\[
K_w(z,\zeta)
=
\sqrt{w(\operatorname{Im}z)}
K_{\mathbb H}(z,\zeta)
\sqrt{w(\operatorname{Im}\zeta)}.
\]
In particular, for every integer \(k\ge1\), its \(k\)-th factorial moment density is
\[
\rho_{M_w}^{(k)}(z_1,\ldots,z_k)
=
\det[K_w(z_i,z_j)]_{i,j=1}^k.
\]
\end{lemma}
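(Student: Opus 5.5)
The plan is to compute the factorial moment measures of \(M_w\) directly from the independent-thinning construction, and then deduce the determinantal structure from a determinant identity. I will work with the factorial moment measures because a point process on \(\mathbb H\) is called determinantal with kernel \(K\) exactly when its factorial moment densities are \(\det[K(z_i,z_j)]\).

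\emph{Factorial moments of the thinned process.} Realize the thinning by attaching to each point \(z\) of \(Z_{\mathbb H}\) an independent uniform mark \(U_z\in[0,1]\), and retain \(z\) iff \(U_z\le w(\operatorname{Im}z)\). Fix \(k\) and a nonnegative measurable \(f\) on \(\mathbb H^k\). Conditioning on \(Z_{\mathbb H}\) and using independence of the marks at distinct points, I get
\[
\mathbb E\sum^{\ne}_{z_1,\ldots,z_k\in M_w} f(z_1,\ldots,z_k)
=
\mathbb E\sum^{\ne}_{z_1,\ldots,z_k\in Z_{\mathbb H}} f(z_1,\ldots,z_k)\prod_{j=1}^k w(\operatorname{Im}z_j).
\]
This is the standard marking theorem for independent marks. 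By \eqref{eq:hardy-szego-factorial-density}, the right-hand side equals
\[
\int f\,\prod_j w(\operatorname{Im}z_j)\,\det[K_{\mathbb H}(z_i,z_j)]\,\mathrm dA^{\otimes k}.
\]
Hence \(M_w\) has \(k\)-th factorial moment density \(\prod_j w(\operatorname{Im}z_j)\det[K_{\mathbb H}(z_i,z_j)]\).

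\emph{Matching the determinant.} Write \(D=\operatorname{diag}(\sqrt{w(\operatorname{Im}z_j)})\). Then \([K_w(z_i,z_j)]=D[K_{\mathbb H}(z_i,z_j)]D\), so
\[
\det[K_w(z_i,z_j)]=(\det D)^2\det[K_{\mathbb H}(z_i,z_j)].
\]
This is exactly the density found above. Moreover, \(K_w\) is Hermitian, measurable and locally bounded on \(\mathbb H\), since \(0\le w\le1\) and \(|K_{\mathbb H}|\) is bounded on compacts. So the factorial moment densities have the determinantal form, which proves both claims of the lemma.

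\emph{Main obstacle.} The only delicate step is justifying the conditional-independence identity in the first step rigorously. This needs measurability of the marking, and I would handle it via the construction of independently marked point processes in \cite{DaleyVereJones2008}. If the definition of ``determinantal'' being used also requires uniqueness of the law, that follows from the factorial moment bounds. By Hadamard's inequality \eqref{eq:hardy-szego-hadamard-bound}, these give \(\rho^{(k)}\le\prod_j(4\pi(\operatorname{Im}z_j)^2)^{-1}\) on the support \([\alpha,\beta]\) in height. This yields exponential moments of counts in bounded sets, so the moments determine the law. Alternatively, one may simply cite \cite{LavancierMollerRubak2015}.
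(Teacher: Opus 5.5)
Your proposal is correct and follows the paper's proof. Both weight the \(k\)-th factorial moment measure of \(Z_{\mathbb H}\) by \(\prod_j w(\operatorname{Im}z_j)\) via independent thinning, then absorb the weight into the determinant through the diagonal conjugation \([K_w(z_i,z_j)]=D\,[K_{\mathbb H}(z_i,z_j)]\,D\); your remarks on the marking construction and on uniqueness of the law via the Hadamard bound are sound supplements the paper leaves implicit.
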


\begin{proof}
Fix \(k\ge1\), and let \(F\) be a nonnegative compactly supported measurable function on \(\mathbb H^k\).
Independent thinning and \eqref{eq:hardy-szego-factorial-density} give
\[
\begin{aligned}
\mathbb E\,
\smashoperator[r]{\sum_{z_1,\ldots,z_k\in M_w}^{\ne}}
F(z_1,\ldots,z_k)
&=
\int_{\mathbb H^k}
F(z_1,\ldots,z_k)
\Bigl(\prod_{j=1}^k w(\operatorname{Im}z_j)\Bigr)
\det[K_{\mathbb H}(z_i,z_j)]_{i,j=1}^k
\prod_{j=1}^k\mathrm dA(z_j)
\\
&=
\int_{\mathbb H^k}
F(z_1,\ldots,z_k)
\det[K_w(z_i,z_j)]_{i,j=1}^k
\prod_{j=1}^k\mathrm dA(z_j).
\end{aligned}
\]
Hence \(M_w\) is determinantal with the asserted factorial moment densities.
\end{proof}

\subsection{Basic properties and factorial moment densities}
\label{subsec:projection-intensity-stationarity}

Recall the horizontal projection \(p:\mathbb H\to\mathbb R\), \(p(x+iy)=x\), and the projected process
\[
X_w=p_\#M_w
=\sum_{z=x+iy\in M_w}\delta_x.
\]
The following lemma establishes its local finiteness and stationarity and identifies its intensity.

\begin{lemma}
\label{L:projected-local-finiteness-stationarity}
The projected process \(X_w\) is locally finite and stationary on \(\mathbb R\), with intensity measure \(\lambda_w\,\mathrm dx\), where
\begin{equation}
\label{eq:projected-intensity}
\lambda_w
=
\frac{1}{4\pi}
\int_0^\infty\frac{w(y)}{y^2}\,\mathrm dy.
\end{equation}
Equivalently,
\(
\mathbb E X_w(B)=\lambda_w|B|
\)
for every bounded Borel set \(B\subset\mathbb R\).
\end{lemma}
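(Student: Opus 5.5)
The plan is to compute the first moment of \(X_w(B)\) with the one-point case (\(k=1\)) of Lemma~\ref{L:height-thinning-kernel}, and then deduce local finiteness and stationarity from it. For a bounded Borel set \(B\subset\mathbb R\),
\[
X_w(B)=M_w\bigl(p^{-1}(B)\bigr)=M_w\bigl(B\times(0,\infty)\bigr).
\]
Lemma~\ref{L:height-thinning-kernel} gives the one-point density \(K_w(z,z)=w(y)/(4\pi y^2)\) for \(z=x+iy\). Tonelli's theorem then yields
\[
\mathbb E X_w(B)
=\int_B\int_0^\infty \frac{w(y)}{4\pi y^2}\,\mathrm dy\,\mathrm dx
=\lambda_w|B|.
\]
This quantity is finite because \(w\) vanishes outside \([\alpha,\beta]\) and \(w\le1\), so \(\lambda_w\le(4\pi)^{-1}(\alpha^{-1}-\beta^{-1})<\infty\). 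It is positive because \(\int_\alpha^\beta w>0\) and \(y^{-2}\ge\beta^{-2}\) on the support of \(w\).

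\emph{Local finiteness.} Since \(\mathbb E X_w([-n,n])<\infty\) for each \(n\in\mathbb N\), we have \(X_w([-n,n])<\infty\) almost surely for each \(n\). A countable intersection over \(n\) gives a single almost sure event on which \(X_w\) is finite on every bounded set, so \(X_w\) is almost surely a locally finite counting measure. One point deserves a line of justification: the strip \(p^{-1}([-n,n])\) is not relatively compact in \(\mathbb H\). This is harmless, because \(M_w\) is almost surely supported in \(\mathbb R\times[\alpha,\beta]\), where points are retained with probability zero outside the support of \(w\). Moreover, \(M_w(A)\) is a measurable random variable for every Borel set \(A\subset\mathbb H\). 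Measurability of \(X_w\) as a random element of the configuration space follows because \(X_w(B)\) is measurable for every bounded Borel set \(B\).

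\emph{Stationarity.} Let \(\tau_t z=z+t\) for \(t\in\mathbb R\). Since \(K_{\mathbb H}(z+t,\zeta+t)=K_{\mathbb H}(z,\zeta)\), \(Z_{\mathbb H}\) is horizontally stationary. The thinning probabilities depend only on \(\operatorname{Im}z\), which is unchanged by \(\tau_t\), so \(M_w\) is horizontally stationary as well. This can be seen directly from the construction of independent thinning with i.i.d.\ uniform marks. Alternatively, note that \(K_w\) is translation invariant, and by Lemma~\ref{L:height-thinning-kernel} all factorial moment densities of \(M_w\) are therefore translation invariant; these determine the law, by the standard uniqueness of a determinantal law given locally trace-class kernels. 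Finally, \(p\circ\tau_t=p+t\), so \(p_\#(\tau_t)_\#M_w=(\cdot+t)_\#X_w\), and the law of \(X_w\) is shift invariant.

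The only step requiring any care is the stationarity argument, specifically justifying that translation invariance of the kernel yields invariance in law. I would rely on the direct thinning construction rather than on uniqueness of determinantal laws.
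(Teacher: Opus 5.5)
Your proposal is correct and follows essentially the same route as the paper: integrate the one-point density \(K_w(x+iy,x+iy)=w(y)/(4\pi y^2)\) over height to get \(\mathbb E X_w(B)=\lambda_w|B|\), deduce local finiteness from a countable intersection over \([-n,n]\), and obtain stationarity from horizontal invariance. The only difference is minor. The paper reads stationarity of \(M_w\) directly off the translation invariance of \(K_w\), whereas you go through the thinning construction; your justification of the stationarity of \(Z_{\mathbb H}\) via \(K_{\mathbb H}\) still relies on the same determinantal-uniqueness fact you say you want to avoid, which is harmless because the paper takes stationarity of \(Z_{\mathbb H}\) as given.
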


\begin{proof}
By Lemma~\ref{L:height-thinning-kernel} and \eqref{eq:hardy-szego-kernel}, for every bounded Borel set \(B\subset\mathbb R\),
\[
\begin{aligned}
\mathbb E X_w(B)
&=
\int_B\int_0^\infty
K_w(x+iy,x+iy)\,\mathrm dy\,\mathrm dx
\\
&=
\frac{|B|}{4\pi}
\int_0^\infty\frac{w(y)}{y^2}\,\mathrm dy
=
\lambda_w|B|.
\end{aligned}
\]
The defining conditions on \(w\) give \(0<\lambda_w<\infty\).
Hence \(X_w([-n,n])<\infty\) almost surely for every \(n\ge1\).
Taking the intersection of these probability-one events proves local finiteness.

Finally, by \eqref{eq:hardy-szego-kernel} and the dependence of \(w\) only on height, \(K_w(z+t,\zeta+t)=K_w(z,\zeta)\) for \(t\in\mathbb R\).
Hence \(M_w\) is horizontally stationary, and \(X_w\) is stationary.
\end{proof}

The simplicity of \(M_w\) on \(\mathbb H\) does not preclude distinct points from sharing the same real coordinate.
The next lemma rules out such collisions after projection.

\begin{lemma}
\label{L:projected-simplicity}
The projected process \(X_w\) is simple: almost surely, \(X_w(\{x\})\le1\) for every \(x\in\mathbb R\).
\end{lemma}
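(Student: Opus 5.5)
The plan is to follow the strategy announced in the introduction: bound the expected number of pairs in a bounded interval whose horizontal coordinates lie within \(\varepsilon\) of each other, and let \(\varepsilon\downarrow0\).

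Since \(X_w\) is stationary and locally finite (Lemma~\ref{L:projected-local-finiteness-stationarity}), it suffices to show that, almost surely, \(X_w(\{x\})\le1\) for all \(x\in[-n,n]\), for each fixed \(n\ge1\). For \(\varepsilon\in(0,1)\) consider the random variable
\[
N_\varepsilon
=
\sum_{z_0,z_1\in M_w}^{\ne}
\mathbf 1\{\operatorname{Re}z_0\in[-n,n],\ |\operatorname{Re}z_1-\operatorname{Re}z_0|<\varepsilon\}.
\]
On the event that \(X_w(\{x\})\ge2\) for some \(x\in[-n,n]\), there are two distinct points of \(M_w\) with the same real part \(x\), so \(N_\varepsilon\ge1\) for every \(\varepsilon\). (\(M_w\) is simple as a thinning of the simple process \(Z_{\mathbb H}\), so distinct atoms of \(M_w\) are distinct points.) Hence
\[
\mathbb P\{\exists x\in[-n,n]: X_w(\{x\})\ge2\}\le \mathbb E N_\varepsilon .
\]

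We then compute \(\mathbb E N_\varepsilon\) by Lemma~\ref{L:height-thinning-kernel} with \(k=2\). We drop the nonpositive term \(-|K_w(z_0,z_1)|^2\), i.e.\ we use Hadamard's bound \eqref{eq:hardy-szego-hadamard-bound} multiplied by \(w(y_0)w(y_1)\le \mathbf 1_{[\alpha,\beta]}(y_0)\mathbf 1_{[\alpha,\beta]}(y_1)\). This gives
\[
\mathbb E N_\varepsilon
\le
\int_{-n}^{n}\int_{x_0-\varepsilon}^{x_0+\varepsilon}
\Bigl(\int_0^\infty\frac{w(y)}{4\pi y^2}\,\mathrm dy\Bigr)^2
\mathrm dx_1\,\mathrm dx_0
=
4n\varepsilon\,\lambda_w^2,
\]
using Fubini--Tonelli for nonnegative integrands. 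Letting \(\varepsilon\downarrow0\) shows the probability is zero. A countable union over \(n\) then finishes the proof.

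No step is genuinely hard. The only point needing care is the measurability of the collision event. I would sidestep it by working directly with the inclusion of the event into \(\{N_\varepsilon\ge1\}\) for all rational \(\varepsilon\): the intersection \(\bigcap_{\varepsilon\in\mathbb Q_{>0}}\{N_\varepsilon\ge1\}\) is measurable and has probability \(\le\inf_\varepsilon 4n\varepsilon\lambda_w^2=0\). So the collision event is contained in a null set, which suffices if the probability space is completed or, equivalently, "almost surely" is read in that sense.
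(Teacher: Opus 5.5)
Your proposal is correct and follows the paper's proof: you bound the expected number of ordered pairs of distinct points of \(M_w\) with nearby real parts via the Hadamard bound, getting \(2\varepsilon|B|\lambda_w^2\) (your \(4n\varepsilon\lambda_w^2\) with \(B=[-n,n]\)), then let \(\varepsilon\downarrow0\) and take a countable exhaustion of \(\mathbb R\). Your remark that the collision event lies in the measurable null set \(\bigcap_{\varepsilon\in\mathbb Q_{>0}}\{N_\varepsilon\ge1\}\) makes explicit a measurability point the paper leaves implicit.
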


\begin{proof}
It suffices to rule out projected collisions in every bounded interval with rational endpoints.
Fix such an interval \(B\), and, for \(\varepsilon>0\), define
\[
M_{B,\varepsilon}
=
\sum_{\substack{z,\zeta\in M_w\\z\ne\zeta}}
\mathbf 1_B(\operatorname{Re}z)
\mathbf 1
\left\{
|\operatorname{Re}z-\operatorname{Re}\zeta|<\varepsilon
\right\}.
\]
A projected collision in \(B\) implies \(M_{B,\varepsilon}\ge1\) for every \(\varepsilon>0\).
Hence
\[
\mathbb{P}
\left\{
\exists x\in B:X_w(\{x\})\ge2
\right\}
\le
\mathbb{P}\{M_{B,\varepsilon}\ge1\}
\le
\mathbb{E}M_{B,\varepsilon}.
\]
By Lemma~\ref{L:height-thinning-kernel} and \eqref{eq:hardy-szego-hadamard-bound},
\[
\begin{aligned}
\mathbb{E}M_{B,\varepsilon}
&=
\int_B
\int_{\{|x'-x|<\varepsilon\}}
\int_0^\infty\int_0^\infty
\rho_{M_w}^{(2)}(x+iy,x'+iv)
\,\mathrm dy\,\mathrm dv\,\mathrm dx'\,\mathrm dx
\\
&\le
\int_B
\int_{\{x'\in\mathbb{R}:|x'-x|<\varepsilon\}}
\int_0^\infty
\int_0^\infty
\frac{w(y)}{4\pi y^2}
\frac{w(v)}{4\pi v^2}
\,\mathrm{d}y\,\mathrm{d}v\,\mathrm{d}x'\,\mathrm{d}x
=
2\varepsilon |B|\lambda_w^2.
\end{aligned}
\]
Letting \(\varepsilon\downarrow0\) shows that the collision probability in \(B\) is zero.
A countable exhaustion of \(\mathbb R\) by bounded intervals with rational endpoints proves the claim.
\end{proof}

\paragraph{Factorial moment densities.}
\label{subsec:projected-factorial-intensities}

Factorial sums are ordered: \(\sum_{x_1,\ldots,x_k\in X_w}^{\neq}\) denotes summation over ordered \(k\)-tuples of pairwise distinct projected points.

\begin{proposition}
\label{P:projected-factorial-intensities}
For every integer \(k\ge1\), the \(k\)-th factorial moment measure of \(X_w\) is absolutely continuous with respect to Lebesgue measure on \(\mathbb R^k\).
A bounded continuous, symmetric, and diagonally translation-invariant representative of its density is given by
\[
\rho_w^{(k)}(x_1,\ldots,x_k)
=
\int_0^\infty\cdots\int_0^\infty
\left(\prod_{j=1}^k w(y_j)\right)
\det
\left[
K_{\mathbb H}(x_i+iy_i,x_j+iy_j)
\right]_{i,j=1}^k
\,\mathrm dy_1\cdots\mathrm dy_k.
\]
Moreover,
\[
0
\le
\rho_w^{(k)}(x_1,\ldots,x_k)
\le
\lambda_w^k,
\qquad
(x_1,\ldots,x_k)\in\mathbb R^k.
\]
\end{proposition}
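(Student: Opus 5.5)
We start from the marked factorial moment identity in Lemma~\ref{L:height-thinning-kernel} and push it forward under the projection. Fix \(k\ge1\) and a nonnegative measurable function \(f\) on \(\mathbb R^k\). Since \(X_w=p_\#M_w\) and distinct points of \(M_w\) give distinct atoms of \(X_w\) by Lemma~\ref{L:projected-simplicity}, the ordered factorial sum over \(X_w\) coincides almost surely with the ordered factorial sum over \(M_w\) of \(f(\operatorname{Re}z_1,\ldots,\operatorname{Re}z_k)\). Applying Lemma~\ref{L:height-thinning-kernel} to the function \((z_1,\ldots,z_k)\mapsto f(\operatorname{Re}z_1,\ldots,\operatorname{Re}z_k)\) gives
\[
\mathbb E\sum^{\ne}_{x_1,\ldots,x_k\in X_w} f(x_1,\ldots,x_k)
=
\int_{\mathbb R^k}\int_{(0,\infty)^k} f(x)\Bigl(\prod_j w(y_j)\Bigr)\det[K_{\mathbb H}(x_i+iy_i,x_j+iy_j)]\,\mathrm dy\,\mathrm dx .
\]
Here we note that \(\det[K_w]=\prod_j w(y_j)\det[K_{\mathbb H}]\), which is just multilinearity of the determinant. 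The integrand is nonnegative, so Tonelli's theorem lets us integrate out the heights first. This shows that the factorial moment measure is absolutely continuous with density \(\rho_w^{(k)}\) as displayed. The same integral, since the function inside is nonnegative, also holds for arbitrary nonnegative \(f\), which settles measurability and representation.

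The bounds follow from the Hadamard estimate \eqref{eq:hardy-szego-hadamard-bound}. The integrand lies between \(0\) and \(\prod_j w(y_j)/(4\pi y_j^2)\), and integrating gives \(0\le\rho_w^{(k)}\le\lambda_w^k\) at every point, not merely almost everywhere. Symmetry follows because permuting the pairs \((x_j,y_j)\) simultaneously conjugates the kernel matrix by a permutation matrix, and a change of variables in the \(y\)'s then leaves the determinant unchanged. Diagonal translation invariance follows from \(K_{\mathbb H}(z+t,\zeta+t)=K_{\mathbb H}(z,\zeta)\).

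The only substantive point is continuity, which we handle by dominated convergence. Since \(w\) vanishes off \([\alpha,\beta]\), the \(y\)-integration runs over the compact box \([\alpha,\beta]^k\). On that box every \(|z_i-\overline{\zeta_j}|\ge\operatorname{Im}z_i+\operatorname{Im}\zeta_j\ge2\alpha\), so each kernel entry is continuous in \(x\) and bounded by \(1/(4\pi\alpha^2)\). Hence the determinant is continuous in \((x_1,\ldots,x_k)\) for each fixed \(y\) and is bounded uniformly by \(k!\,(4\pi\alpha^2)^{-k}\). Alternatively, Hadamard supplies the integrable majorant \(\prod_j w(y_j)/(4\pi y_j^2)\) directly. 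Dominated convergence then shows that \(x\mapsto\rho_w^{(k)}(x)\) is continuous. I expect no real obstacle beyond checking that the pointwise bound holds everywhere, so that the continuous version is the one the bound refers to.
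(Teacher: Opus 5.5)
Your proposal is correct and follows essentially the same route as the paper: you transfer the marked factorial moment formula to $X_w$ via projected simplicity, integrate out the heights, bound the density pointwise by Hadamard's inequality, and obtain continuity by dominated convergence with a majorant that depends only on the heights. The only differences are cosmetic: you work directly with arbitrary nonnegative test functions rather than compactly supported ones, and you offer the explicit box majorant $k!\,(4\pi\alpha^2)^{-k}\mathbf 1_{[\alpha,\beta]^k}$ alongside the Hadamard one.
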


\begin{proof}
Fix \(k\ge1\), and let \(F\) be a nonnegative compactly supported measurable function on \(\mathbb R^k\).
By Lemma~\ref{L:projected-simplicity} and the factorial moment formula in Lemma~\ref{L:height-thinning-kernel},
\[
\begin{aligned}
\mathbb E
\sum_{x_1,\ldots,x_k\in X_w}^{\neq}
F(x_1,\ldots,x_k)
&=
\mathbb E
\sum_{z_1,\ldots,z_k\in M_w}^{\neq}
F(\operatorname{Re}z_1,\ldots,\operatorname{Re}z_k)
\\
&=
\int_{\mathbb R^k}
F(x_1,\ldots,x_k)
\rho_w^{(k)}(x_1,\ldots,x_k)
\,\mathrm dx_1\cdots\mathrm dx_k.
\end{aligned}
\]

By \eqref{eq:hardy-szego-hadamard-bound} and \eqref{eq:projected-intensity},
\[
0
\le
\rho_w^{(k)}(x_1,\ldots,x_k)
\le
\int_0^\infty\cdots\int_0^\infty
\prod_{j=1}^k
\frac{w(y_j)}{4\pi y_j^2}
\,\mathrm dy_1\cdots\mathrm dy_k
=
\lambda_w^k.
\]
For fixed heights, the integrand defining \(\rho_w^{(k)}\) is continuous in \((x_1,\ldots,x_k)\), while the support condition on \(w\) and \eqref{eq:hardy-szego-hadamard-bound} yield an integrable majorant independent of these coordinates.
Dominated convergence gives continuity.
Relabeling the height variables and simultaneously permuting the corresponding rows and columns gives symmetry, while \eqref{eq:hardy-szego-kernel} gives diagonal translation invariance.
The integral formula fixes the continuous representative of \(\rho_w^{(k)}\) used below; changing its values on coordinate diagonals would not affect the factorial moment measure.
\end{proof}

The densities in Proposition~\ref{P:projected-factorial-intensities} are obtained by integrating the marked planar determinantal densities over the height variables; no determinantal representation of \(X_w\) on \(\mathbb R\) is asserted.

\subsection{Second factorial cumulant density}
\label{subsec:two-point-function}

For \(k=2\), diagonal translation invariance implies that the factorial moment density depends only on \(r=x_2-x_1\); write
\[
\rho_w^{(2)}(r)=\rho_w^{(2)}(0,r),
\qquad
g_w(r)=\rho_w^{(2)}(r)-\lambda_w^2.
\]
The two-point density and its factorial cumulant admit the following explicit formulas.

\begin{proposition}[Projected covariance transform]
\label{P:projected-covariance-transform}
The stationary two-point factorial moment density admits the continuous representative
\begin{equation}
\label{eq:projected-two-point-density}
\rho_w^{(2)}(r)
=
\lambda_w^2
-
\frac{1}{\pi^2}
\int_0^\infty\int_0^\infty
\frac{w(y)w(v)}
{\left(r^2+(y+v)^2\right)^2}
\,\mathrm dy\,\mathrm dv,
\qquad r\in\mathbb R.
\end{equation}
Equivalently,
\[
g_w(r)
=
-\frac{1}{\pi^2}
\int_0^\infty\int_0^\infty
\frac{w(y)w(v)}
{\left(r^2+(y+v)^2\right)^2}
\,\mathrm dy\,\mathrm dv.
\]
Moreover, \(\rho_w^{(2)}(r)>0\) for every \(r\in\mathbb R\).
\end{proposition}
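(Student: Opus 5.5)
We start from the integral representation in Proposition~\ref{P:projected-factorial-intensities} with \(k=2\), \(x_1=0\), \(x_2=r\). The \(2\times2\) determinant is
\[
K_{\mathbb H}(iy,iy)K_{\mathbb H}(r+iv,r+iv)-|K_{\mathbb H}(iy,r+iv)|^2 .
\]
The diagonal product is \(1/(16\pi^2y^2v^2)\). After multiplying by \(w(y)w(v)\) and integrating, it gives exactly \(\lambda_w^2\) by \eqref{eq:projected-intensity}. For the off-diagonal term, \eqref{eq:hardy-szego-kernel} gives \(iy-\overline{(r+iv)}=-r+i(y+v)\). Hence
\[
|K_{\mathbb H}(iy,r+iv)|^2=\frac{1}{\pi^2|{-r}+i(y+v)|^4}=\frac{1}{\pi^2\bigl(r^2+(y+v)^2\bigr)^2}.
\]
Both pieces are integrable separately, since \(w\) is supported in \([\alpha,\beta]\) and bounded, so the integrands are bounded on a compact set. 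Splitting the integral by linearity is therefore legitimate and yields \eqref{eq:projected-two-point-density} together with the formula for \(g_w\). Continuity in \(r\) is inherited from Proposition~\ref{P:projected-factorial-intensities}, or follows directly from dominated convergence.

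The only substantive point is strict positivity. Nonnegativity is free from Hadamard, but strictness needs an argument. I would show that the integrand
\[
w(y)w(v)\left[\frac{1}{16\pi^2y^2v^2}-\frac{1}{\pi^2(r^2+(y+v)^2)^2}\right]
\]
is nonnegative and strictly positive on a set of positive \(\mathrm dy\,\mathrm dv\)-measure. The bracket is at least
\[
\frac{1}{\pi^2}\left[\frac{1}{16y^2v^2}-\frac{1}{(y+v)^4}\right],
\]
which is nonnegative by AM--GM, since \((y+v)^4\ge 16y^2v^2\). Equality holds only when \(y=v\), and then only when \(r=0\). So for \(y\ne v\) the bracket is strictly positive for every \(r\).

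It remains to check that the set \(\{(y,v):w(y)w(v)>0,\ y\neq v\}\) has positive planar measure. Let \(A=\{w>0\}\). Since \(\int w>0\), we have \(|A|>0\), so \(|A\times A|=|A|^2>0\). The diagonal has planar measure zero, so removing it leaves positive measure. On this set the integrand is strictly positive, hence \(\rho_w^{(2)}(r)>0\) for all \(r\in\mathbb R\).

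The main obstacle is this positivity step, specifically handling the equality case \(r=0\), \(y=v\). The measure-zero diagonal argument settles it.
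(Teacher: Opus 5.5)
Your proposal is correct and follows essentially the same route as the paper. You expand the $2\times 2$ determinant, identify the diagonal part as $\lambda_w^2$ and the off-diagonal part as $\pi^{-2}\bigl(r^2+(y+v)^2\bigr)^{-2}$, and deduce strict positivity from the pointwise sign of the bracket off the diagonal $y=v$ together with $|\{w>0\}|>0$. The differences are cosmetic. You get the sign from $(y+v)^2\ge 4yv$ and monotonicity in $r$, where the paper uses the explicit factorization with numerator $\bigl(r^2+(y-v)^2\bigr)\bigl(r^2+y^2+6yv+v^2\bigr)$. You also evaluate $r=0$ directly from the pointwise integral representative, whereas the paper extends to $r=0$ by continuity.
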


\begin{proof}
By Proposition~\ref{P:projected-factorial-intensities} and \eqref{eq:hardy-szego-kernel}, for \(r\ne0\),
\[
\begin{aligned}
\rho_w^{(2)}(r)
&=
\int_0^\infty\int_0^\infty
w(y)w(v)
\left[
\frac{1}{16\pi^2y^2v^2}
-
\frac{1}{\pi^2\left(r^2+(y+v)^2\right)^2}
\right]
\,\mathrm dy\,\mathrm dv
\\
&=
\lambda_w^2
-
\frac{1}{\pi^2}
\int_0^\infty\int_0^\infty
\frac{w(y)w(v)}
{\left(r^2+(y+v)^2\right)^2}
\,\mathrm dy\,\mathrm dv.
\end{aligned}
\]
On the support of \(w(y)w(v)\), \(y+v\ge2\alpha\), so the integral term is finite and continuous in \(r\) by dominated convergence.
The identity therefore extends to \(r=0\) and gives the asserted continuous representative.
The formula for \(g_w\) follows from its definition.

For every \(r\in\mathbb R\),
\[
\frac{1}{16\pi^2y^2v^2}
-
\frac{1}{\pi^2\left(r^2+(y+v)^2\right)^2}
=
\frac{\left(r^2+(y-v)^2\right)
\left(r^2+y^2+6yv+v^2\right)}
{16\pi^2y^2v^2\left(r^2+(y+v)^2\right)^2}.
\]
This factor is strictly positive for \(r\ne0\) and, for \(r=0\), vanishes only when \(y=v\).
Since
\(
E=\{y\in[\alpha,\beta]:w(y)>0\}
\)
has positive Lebesgue measure and the diagonal has zero measure in \(E\times E\), integration against \(w(y)w(v)\,\mathrm dy\,\mathrm dv\) gives \(\rho_w^{(2)}(r)>0\) for every \(r\in\mathbb R\).
\end{proof}

\section{Palm disintegration and local spacing}
\label{sec:projected-palm-disintegration}

This section proves Theorem~\ref{T:projected-palm-disintegration}, Proposition~\ref{P:hidden-two-mark-law}, and Theorem~\ref{T:linear-right-short-gap}.
It also derives conditional distributions of the unobserved height marks and factorial moment identities under the projected reduced Palm law.
The projected and marked reduced Palm laws are defined through the Campbell identities below.

Fix \(w\in\mathcal W_{\alpha,\beta}\) throughout, and recall that \(X_w=p_\#M_w\).
By Lemmas~\ref{L:projected-local-finiteness-stationarity} and \ref{L:projected-simplicity}, \(X_w\) is locally finite, stationary, and simple, with intensity \(\lambda_w\) given by \eqref{eq:projected-intensity}.

\subsection{Projected reduced Palm disintegration}
\label{subsec:palm-definitions}

Let \(\mathcal N_{\mathbb R}\) and \(\mathcal N_{\mathbb H}\) denote the spaces of locally finite simple counting measures on \(\mathbb R\) and \(\mathbb H\), endowed with the Borel \(\sigma\)-fields induced by the vague topology.
For \(x\in\mathbb R\), let \(\theta_x\) denote horizontal translation by \(-x\) on either space; then \(p_\#(\theta_x\xi)=\theta_x(p_\#\xi)\).
We use the Palm-kernel and reduced Palm conventions of \cite[Chapter~6]{Kallenberg2017} and \cite[Palm Theory, pp.~268--354]{DaleyVereJones2008}; see also \cite[pp.~2--18]{BaccelliBremaud1987} for the stationary marked setting.

The reduced Palm distribution \(\mathbb P_w^{0,!}\) of \(X_w\) at the origin is characterized by
\begin{equation}
\label{eq:projected-reduced-palm-campbell}
\mathbb E
\sum_{x\in X_w}
\mathbf 1_B(x)
F\left(\theta_x(X_w-\delta_x)\right)
=
|B|\lambda_w
\mathbb E_w^{0,!}[F(X_w)]
\end{equation}
for every bounded Borel set \(B\subset\mathbb R\) with \(|B|>0\) and every bounded measurable \(F:\mathcal N_{\mathbb R}\to\mathbb R\).

A reduced Palm probability kernel \(z\mapsto\mathbb P_{M_w}^{z,!}\), defined up to \(K_w(z,z)\,\mathrm dA(z)\)-null sets, is characterized by
\begin{equation}
\label{eq:marked-reduced-palm-campbell}
\mathbb E
\sum_{z\in M_w}
H(z,M_w-\delta_z)
=
\int_{\mathbb H}
\int_{\mathcal N_{\mathbb H}}
H(z,\eta)\,
\mathbb P_{M_w}^{z,!}(\mathrm d\eta)\,
K_w(z,z)\,\mathrm dA(z)
\end{equation}
for every nonnegative measurable \(H:\mathbb H\times\mathcal N_{\mathbb H}\to[0,\infty]\).

Recall the probability measure
\begin{equation}
\label{eq:palm-height-mixing-measure}
\nu_w(\mathrm dy)
=
\frac{w(y)y^{-2}\,\mathrm dy}
{\int_0^\infty w(u)u^{-2}\,\mathrm du}.
\end{equation}
For \(z=x+iy\),
\begin{equation}
\label{eq:marked-campbell-intensity}
K_w(z,z)\,\mathrm dA(z)
=
\frac{w(y)}{4\pi y^2}\,\mathrm dx\,\mathrm dy
=
\lambda_w\,\mathrm dx\,\nu_w(\mathrm dy).
\end{equation}

The next proposition constructs the height-indexed Palm family used in the disintegration.

\begin{proposition}
\label{P:canonical-height-indexed-palm-family}
For \(y>0\), define
\[
\begin{aligned}
K_{\mathbb H}^{iy,!}(z,\zeta)
&=
K_{\mathbb H}(z,\zeta)
-
\frac{K_{\mathbb H}(z,iy)K_{\mathbb H}(iy,\zeta)}
     {K_{\mathbb H}(iy,iy)},\\
\widehat K_{w,y}^{!}(z,\zeta)
&=
\sqrt{w(\operatorname{Im}z)}
K_{\mathbb H}^{iy,!}(z,\zeta)
\sqrt{w(\operatorname{Im}\zeta)}.
\end{aligned}
\]
Then the following hold.
\begin{enumerate}
\item[\textup{(i)}]
For every \(y>0\), the kernel \(\widehat K_{w,y}^{!}\) defines a determinantal law \(\mathsf P_{w,y}^{!}\) on \(\mathcal N_{\mathbb H}\), and \(y\mapsto\mathsf P_{w,y}^{!}\) is a weakly continuous probability kernel from \((0,\infty)\) to \(\mathcal N_{\mathbb H}\).
If \(w(y)>0\), then
\[
\widehat K_{w,y}^{!}(z,\zeta)
=
K_w(z,\zeta)
-
\frac{K_w(z,iy)K_w(iy,\zeta)}{K_w(iy,iy)}.
\]
Moreover, \(\mathsf P_{w,y}^{!}\) is a reduced Palm law of \(M_w\) at \(iy\) for \(\nu_w\)-almost every \(y\); the definition of \(\widehat K_{w,y}^{!}\) fixes the pointwise version used below for every \(y>0\).

\item[\textup{(ii)}]
If \(\tau_x\) denotes horizontal translation by \(x\), then \(\{(\tau_x)_\#\mathsf P_{w,y}^{!}\}_{x\in\mathbb R,\,y>0}\) is horizontally covariant and forms a version of the reduced Palm probability kernel of \(M_w\) relative to its intensity measure \(K_w(z,z)\,\mathrm dA(z)\).
We write \(\mathbb E_{M_w}^{x+iy,!}\) for expectation with respect to \((\tau_x)_\#\mathsf P_{w,y}^{!}\).
\end{enumerate}
\end{proposition}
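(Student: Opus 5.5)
First I would treat the unthinned kernel. The kernel \(K_{\mathbb H}\) is the reproducing kernel of the Bergman-type space \(A^2(\mathbb H)\) of analytic functions, and \(\pi^{-1}(z-\overline\zeta)^{-2}\) up to sign. Hence the integral operator \(\mathcal K\) on \(L^2(\mathbb H,\mathrm dA)\) with kernel \(K_{\mathbb H}\) is an orthogonal projection. The kernel \(K_{\mathbb H}^{iy,!}\) is the reproducing kernel of the closed subspace \(\{f\in A^2:f(iy)=0\}\), so it also defines an orthogonal projection \(\mathcal K^{iy,!}\le\mathcal K\). Multiplication by \(\sqrt{w(\operatorname{Im}\cdot)}\), with \(0\le w\le1\), gives \(\widehat{\mathcal K}_{w,y}^{!}=\sqrt w\,\mathcal K^{iy,!}\sqrt w\). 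This is a self-adjoint operator with \(0\le\widehat{\mathcal K}_{w,y}^{!}\le I\).

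It is locally trace class, because for compact \(\Lambda\subset\mathbb H\) its trace is \(\int_\Lambda w\,K^{iy,!}_{\mathbb H}(z,z)\,\mathrm dA\le\int_\Lambda K_{\mathbb H}(z,z)\,\mathrm dA<\infty\). In fact it is globally trace class with respect to horizontal strips once the support condition is used. The Macchi--Soshnikov existence theorem (\cite{Soshnikov2000,HoughKrishnapurPeresVirag2006}) then gives a determinantal law \(\mathsf P_{w,y}^{!}\). Equivalently, \(\mathsf P_{w,y}^{!}\) is the \(w\)-thinning of the determinantal process with kernel \(K_{\mathbb H}^{iy,!}\), by the same computation as in Lemma~\ref{L:height-thinning-kernel}. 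The identity for \(w(y)>0\) is immediate: \(K_w(z,iy)K_w(iy,\zeta)/K_w(iy,iy)\) equals \(\sqrt{w(\operatorname{Im}z)}\,K_{\mathbb H}(z,iy)K_{\mathbb H}(iy,\zeta)\,\sqrt{w(\operatorname{Im}\zeta)}/K_{\mathbb H}(iy,iy)\), because the factors \(w(y)\) cancel.

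For weak continuity in \(y\), I would first note that \(y\mapsto\widehat K_{w,y}^{!}(z,\zeta)\) is continuous pointwise, since \(K_{\mathbb H}\) is continuous and \(K_{\mathbb H}(iy,iy)=1/(4\pi y^2)>0\). I would then check local uniform domination on compacts, using Hadamard's inequality and \(|K^{iy,!}_{\mathbb H}(z,\zeta)|^2\le K_{\mathbb H}(z,z)K_{\mathbb H}(\zeta,\zeta)\), the latter because \(K^{iy,!}_{\mathbb H}\) is a positive kernel. It follows that all correlation functions converge locally uniformly. The actual continuity statement goes through Laplace functionals. For \(f\ge0\) continuous with compact support, \(\mathbb E\exp(-\sum f)=\det(I-(1-e^{-f})\widehat{\mathcal K}_{w,y}^{!})\), a Fredholm determinant on \(L^2(\operatorname{supp}f)\). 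Trace-class continuity of \(y\mapsto\mathbf 1_\Lambda\widehat{\mathcal K}^{!}_{w,y}\mathbf 1_\Lambda\) follows from the difference being a rank-one kernel varying continuously in \(L^2(\Lambda\times\Lambda)\), together with continuity of the Fredholm determinant. Convergence of Laplace functionals for all such \(f\) gives vague-topology weak convergence. The same argument gives measurability of the kernel \(y\mapsto\mathsf P^{!}_{w,y}\).

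For the Palm property, I would invoke the Shirai--Takahashi theorem \cite{ShiraiTakahashi2003}. For a determinantal process with a locally trace-class Hermitian kernel \(K_w\), the reduced Palm law at \(z_0\) is determinantal with kernel \(K_w-K_w(\cdot,z_0)K_w(z_0,\cdot)/K_w(z_0,z_0)\), for \(K_w(z,z)\,\mathrm dA\)-a.e. \(z_0\). Where I do not want to rely on the a.e. version of that theorem, I would verify \eqref{eq:marked-reduced-palm-campbell} directly. It suffices to check it for \(H(z,\eta)=h(z)\prod_{u\in\eta}(1-\phi(u))\), because such functionals determine laws. Expanding via factorial moments, the left side is \(\sum_k\frac{(-1)^k}{k!}\int h(z_0)\prod\phi(z_j)\rho^{(k+1)}_{M_w}(z_0,\ldots,z_k)\). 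The Schur complement identity \(\det[K_w]_{k+1}=K_w(z_0,z_0)\det[\widehat K^{!}]_{k}\) then matches this with the right side. Convergence of the series is justified by Hadamard bounds when \(\phi\) has compact support.

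Part (i) is stated at height points \(iy\), and part (ii) extends it to all \(x+iy\). Horizontal invariance \(K_{\mathbb H}(z+t,\zeta+t)=K_{\mathbb H}(z,\zeta)\) shows that the Palm kernel at \(x+iy\) is the \(\tau_x\)-translate of the one at \(iy\). This gives covariance, and the identity holds for all \(x\) and all \(y\) with \(w(y)>0\), i.e.\ \(\nu_w\)-a.e. Since \(K_w(z,z)\,\mathrm dA=\lambda_w\,\mathrm dx\,\nu_w(\mathrm dy)\) by \eqref{eq:marked-campbell-intensity}, the exceptional heights form a null set, so the family is a version of the Palm kernel. \textbf{The main obstacle} is the weak continuity in \(y\), specifically the trace-class continuity of the local operators, which is handled as above via the rank-one structure.
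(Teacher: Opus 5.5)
Your proposal is correct and follows essentially the same route as the paper. Like the paper, you identify $K_{\mathbb H}^{iy,!}$ as the Bergman-type projection onto $\{f\in A^2(\mathbb H):f(iy)=0\}$, sandwich it by $\sqrt w$ and apply Soshnikov's existence theorem, and obtain weak continuity from trace-norm continuity of the local compressions (via the rank-one correction and Fredholm determinants of Laplace functionals). The Palm identification likewise uses Shirai--Takahashi, horizontal covariance, and the factorization $K_w(z,z)\,\mathrm dA=\lambda_w\,\mathrm dx\,\nu_w(\mathrm dy)$. Your optional Schur-complement verification of the Campbell identity is a valid self-contained substitute for the citation. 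The unused aside about being ``globally trace class on horizontal strips'' should refer to bounded vertical strips $B\times(0,\infty)$, since a full horizontal strip has infinite trace.
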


\begin{proof}
We first establish existence of the determinantal laws.
The conformal covariance and projection property of the Bergman kernel (see, e.g., \cite[Chapters~15--16]{Bell1992} and \cite[Sections~4.3.10 and~5.4.1]{HoughKrishnapurPeresVirag2009}) identify \(K_{\mathbb H}\) as the kernel of the orthogonal projection from \(L^2(\mathbb H,\mathrm dA)\) onto \(A^2(\mathbb H)\).
Consequently, \(K_{\mathbb H}^{iy,!}\) is the kernel of the orthogonal projection onto \(\{f\in A^2(\mathbb H):f(iy)=0\}\).
Writing \(M_{\sqrt w}\) for multiplication by \(\sqrt w\), we have
\[
\widehat K_{w,y}^{!}
=
M_{\sqrt w}K_{\mathbb H}^{iy,!}M_{\sqrt w},
\qquad
K_w
=
M_{\sqrt w}K_{\mathbb H}M_{\sqrt w}.
\]
Since
\(
0\leq K_{\mathbb H}^{iy,!}\leq K_{\mathbb H}\leq I
\)
and \(0\leq M_{\sqrt w}\leq I\), it follows that
\[
0\leq\widehat K_{w,y}^{!}\leq K_w\leq I
\]
in operator order.
Since \(K_w\) is locally trace class, so is \(\widehat K_{w,y}^{!}\), and \cite[Theorem~3]{Soshnikov2000} yields the determinantal law \(\mathsf P_{w,y}^{!}\).

We next prove continuous dependence on \(y\).
For a relatively compact Borel set \(C\subset\mathbb H\), set
\[
g_y(z)
=
\sqrt{w(\operatorname{Im}z)}
\frac{K_{\mathbb H}(z,iy)}{\sqrt{K_{\mathbb H}(iy,iy)}},
\qquad z\in C.
\]
For \(g,h\in L^2(C)\), let \(g\otimes h\) denote the rank-one operator with kernel \(g(z)\overline{h(\zeta)}\).
The compression of \(\widehat K_{w,y}^{!}\) to \(C\) is the corresponding compression of \(K_w\) minus \(g_y\otimes g_y\).
Dominated convergence gives continuity of \(y\mapsto g_y\) in \(L^2(C)\) on compact subintervals of \((0,\infty)\), and
\[
\|g_{y_n}\otimes g_{y_n}-g_y\otimes g_y\|_1
\le
\bigl(\|g_{y_n}\|_2+\|g_y\|_2\bigr)
\|g_{y_n}-g_y\|_2.
\]
Hence the local compressions depend continuously on \(y\) in trace norm.
The Fredholm determinant formula for compactly supported Laplace functionals and the characterization of weak convergence by these functionals yield weak continuity of \(y\mapsto\mathsf P_{w,y}^{!}\), and hence the probability-kernel property.

We finally establish the Palm identification and horizontal covariance.
For \(w(y)>0\), the definitions give the representation in \textup{(i)}.
The family \(\{(\tau_x)_\#\mathsf P_{w,y}^{!}\}_{x\in\mathbb R,\,y>0}\) is horizontally covariant by construction.
Horizontal translation invariance of \(K_w\) and \cite[Theorem~6.5]{ShiraiTakahashi2003} identify this family as a version of the reduced Palm probability kernel of \(M_w\) relative to \(K_w(z,z)\,\mathrm dA(z)\).
The factorization \eqref{eq:marked-campbell-intensity} gives the \(\nu_w\)-almost-everywhere assertion in \textup{(i)}, while the defining kernel fixes the pointwise version specified in \textup{(i)} for every \(y>0\).
\end{proof}

The next lemma shows that each \(\mathsf P_{w,y}^{!}\) has a locally finite simple projection.

\begin{lemma}
\label{L:canonical-palm-projection}
For every \(y>0\), \(p_\#\xi\) is locally finite and simple for \(\mathsf P_{w,y}^{!}\)-almost every \(\xi\).
\end{lemma}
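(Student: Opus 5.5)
We repeat the first-moment arguments of Lemmas~\ref{L:projected-local-finiteness-stationarity} and \ref{L:projected-simplicity}, with the kernel \(K_w\) replaced by \(\widehat K_{w,y}^{!}\). The only input needed is a pointwise domination of the factorial moment densities of \(\mathsf P_{w,y}^{!}\) by those of \(M_w\).

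\emph{Step 1 (domination).} Fix \(y>0\). By Proposition~\ref{P:canonical-height-indexed-palm-family}(i), \(\mathsf P_{w,y}^{!}\) is determinantal with kernel \(\widehat K_{w,y}^{!}\). Its \(k\)-th factorial moment density is therefore \(\det[\widehat K_{w,y}^{!}(z_i,z_j)]_{i,j=1}^k\).

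The diagonal satisfies
\[
\widehat K_{w,y}^{!}(z,z)
=
w(\operatorname{Im}z)\left(K_{\mathbb H}(z,z)-\frac{|K_{\mathbb H}(z,iy)|^2}{K_{\mathbb H}(iy,iy)}\right)
\le K_w(z,z)
=\frac{w(\operatorname{Im}z)}{4\pi(\operatorname{Im}z)^2}.
\]
The matrix \([\widehat K_{w,y}^{!}(z_i,z_j)]\) is positive semidefinite, since \(K_{\mathbb H}^{iy,!}\) is the reproducing kernel of a closed subspace of \(A^2(\mathbb H)\) and we conjugate by a diagonal matrix. Hadamard's inequality then gives
\[
0\le\det[\widehat K_{w,y}^{!}(z_i,z_j)]\le\prod_{j}\frac{w(\operatorname{Im}z_j)}{4\pi(\operatorname{Im}z_j)^2}.
\]
This is the same bound \eqref{eq:hardy-szego-hadamard-bound} used before, with the weights included. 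One point needs care here: the kernel must be continuous, or at least well defined pointwise, so that the pointwise Hadamard argument applies. Since \(K_{\mathbb H}^{iy,!}\) is real-analytic in \((z,\bar\zeta)\), this is not a problem.

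\emph{Step 2 (local finiteness).} For a bounded Borel set \(B\subset\mathbb R\), the first-moment formula and Step~1 give
\[
\int (p_\#\xi)(B)\,\mathsf P_{w,y}^{!}(\mathrm d\xi)
=\int_B\int_0^\infty\widehat K_{w,y}^{!}(x+iv,x+iv)\,\mathrm dv\,\mathrm dx
\le\lambda_w|B|<\infty.
\]
Hence \((p_\#\xi)([-n,n])<\infty\) almost surely for each \(n\). Intersecting these countably many events yields local finiteness of \(p_\#\xi\).

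\emph{Step 3 (simplicity).} For a bounded rational interval \(B\) and \(\varepsilon>0\), define the pair count \(M_{B,\varepsilon}(\xi)\) exactly as in the proof of Lemma~\ref{L:projected-simplicity}. The second factorial moment formula for \(\mathsf P_{w,y}^{!}\) and the bound of Step~1 with \(k=2\) give
\[
\int M_{B,\varepsilon}\,\mathrm d\mathsf P_{w,y}^{!}\le2\varepsilon|B|\lambda_w^2.
\]
Letting \(\varepsilon\downarrow0\) and taking a countable union over rational intervals shows that, \(\mathsf P_{w,y}^{!}\)-almost surely, no two points of \(\xi\) share a real coordinate.

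Simplicity of \(\xi\) itself on \(\mathbb H\) is automatic, since \(\mathsf P_{w,y}^{!}\) lives on \(\mathcal N_{\mathbb H}\), the space of simple configurations. Combining this with the absence of shared real coordinates, \(p_\#\xi\) is simple. The main step is Step~1; once it is in place, the rest is a verbatim repetition of the earlier proofs, and the bounds hold for every \(y>0\), not just \(\nu_w\)-almost every \(y\).
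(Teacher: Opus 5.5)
Your proposal is correct and follows essentially the same route as the paper. The diagonal bound $\widehat K_{w,y}^{!}(z,z)\le K_w(z,z)$ gives the first-moment bound $|B|\lambda_w$ for local finiteness, and Hadamard's inequality on the second factorial moment density gives the pair-count bound $2\varepsilon|B|\lambda_w^2$ for simplicity. Your explicit justification of positive semidefiniteness, via the reproducing kernel of $\{f\in A^2(\mathbb H):f(iy)=0\}$, is a detail the paper leaves implicit in the operator inequality $0\le\widehat K_{w,y}^{!}\le K_w$.
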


\begin{proof}
The diagonal of the defining kernel satisfies
\[
0\leq \widehat K_{w,y}^{!}(z,z)\leq K_w(z,z)
=
\frac{w(\operatorname{Im}z)}
     {4\pi(\operatorname{Im}z)^2}.
\]
Consequently, for every bounded Borel set \(B\subset\mathbb R\),
\[
\mathbb E_{\mathsf P_{w,y}^{!}}\bigl[p_\#\xi(B)\bigr]
=
\int_B\int_0^\infty
\widehat K_{w,y}^{!}(x+iv,x+iv)
\,\mathrm dv\,\mathrm dx
\le
|B|\lambda_w<\infty.
\]
Hence \(p_\#\xi([-n,n])<\infty\) almost surely for every \(n\ge1\) on a common probability-one event, proving local finiteness.

For a bounded interval \(B\subset\mathbb R\) and \(\varepsilon>0\), let \(N_{B,\varepsilon}\) be the number of ordered pairs of distinct points \(z,\zeta\in\xi\) satisfying \(\operatorname{Re}z\in B\) and \(|\operatorname{Re}z-\operatorname{Re}\zeta|<\varepsilon\).
The second factorial moment formula, Hadamard's inequality, and the diagonal bound give
\[
\mathbb E_{\mathsf P_{w,y}^{!}}N_{B,\varepsilon}
\le2\varepsilon|B|\lambda_w^2.
\]
A multiple point of \(p_\#\xi\) in \(B\) implies \(N_{B,\varepsilon}\ge1\) for every \(\varepsilon>0\), so its probability is at most \(2\varepsilon|B|\lambda_w^2\).
Letting \(\varepsilon\downarrow0\) and taking \(B=[-n,n]\), \(n\ge1\), proves simplicity.
\end{proof}

\begin{proof}[Proof of Theorem~\ref{T:projected-palm-disintegration}]
Let \(B\subset\mathbb R\) be a bounded Borel set with \(|B|>0\).
By linearity, it suffices to consider bounded measurable \(F:\mathcal N_{\mathbb R}\to[0,\infty)\).
Extend \(F\circ p_\#\) measurably to \(\mathcal N_{\mathbb H}\) by setting it to zero whenever \(p_\#\eta\notin\mathcal N_{\mathbb R}\); measurability follows from the usual counting-map description of the vague Borel \(\sigma\)-fields.
By Lemmas~\ref{L:projected-local-finiteness-stationarity}, \ref{L:projected-simplicity}, and \ref{L:canonical-palm-projection}, this convention agrees almost surely with the ordinary projection for \(M_w\), for \(M_w-\delta_z\) with \(z\in M_w\), and under every \(\mathsf P_{w,y}^{!}\).

Lemma~\ref{L:projected-simplicity}, \eqref{eq:marked-reduced-palm-campbell}, and \eqref{eq:marked-campbell-intensity} give
\[
\begin{aligned}
\mathbb E
\sum_{x\in X_w}
\mathbf 1_B(x)
F\left(\theta_x(X_w-\delta_x)\right)
&=
\mathbb E
\sum_{z\in M_w}
\mathbf 1_B(\operatorname{Re}z)
F\left(\theta_{\operatorname{Re}z}
p_\#(M_w-\delta_z)\right)
\\
&=
\lambda_w
\int_B\int_0^\infty
\mathbb E_{M_w}^{x+iy,!}
\left[
F\left(\theta_xp_\#M_w\right)
\right]
\nu_w(\mathrm dy)\,\mathrm dx.
\end{aligned}
\]
By Proposition~\ref{P:canonical-height-indexed-palm-family}, this expectation is independent of \(x\) and equals \(\mathbb E_{M_w}^{iy,!}[F(p_\#M_w)]\).
Hence
\[
\mathbb E
\sum_{x\in X_w}
\mathbf 1_B(x)
F\left(\theta_x(X_w-\delta_x)\right)
=
|B|\lambda_w
\int_0^\infty
\mathbb E_{M_w}^{iy,!}
\left[
F(p_\#M_w)
\right]
\nu_w(\mathrm dy).
\]
Comparing this identity with \eqref{eq:projected-reduced-palm-campbell} and dividing by \(|B|\lambda_w\) proves the theorem.
\end{proof}

By Proposition~\ref{P:canonical-height-indexed-palm-family},
\begin{equation}
\label{eq:joint-height-marked-palm-law}
\widehat{\mathsf P}_w^{0,!}(\mathrm dy,\mathrm d\xi)
=
\nu_w(\mathrm dy)\,\mathsf P_{w,y}^{!}(\mathrm d\xi)
\end{equation}
defines a probability measure on \((0,\infty)\times\mathcal N_{\mathbb H}\).
Set \(Y(y,\xi)=y\).
By Lemma~\ref{L:canonical-palm-projection}, \(p_\#\xi\in\mathcal N_{\mathbb R}\) almost surely under this measure.
Define \(X^{!}(y,\xi)=p_\#\xi\) on this full-measure set and as the zero configuration elsewhere.
The next corollary identifies the marginal and conditional laws under \(\widehat{\mathsf P}_w^{0,!}\).

\begin{corollary}
\label{C:projected-palm-height-law}
Under \(\widehat{\mathsf P}_w^{0,!}\), \(Y\) has distribution \(\nu_w\) given by \eqref{eq:palm-height-mixing-measure}, the kernel \(y\mapsto\mathsf P_{w,y}^{!}\) is a version of the conditional law of \(\xi\) given \(Y=y\), and \(X^{!}\) has distribution \(\mathbb P_w^{0,!}\).
\end{corollary}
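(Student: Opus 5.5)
The corollary follows directly from the product structure in \eqref{eq:joint-height-marked-palm-law} together with Theorem~\ref{T:projected-palm-disintegration}. I would verify its three assertions in the following order.

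\emph{Marginal of \(Y\).} For a Borel set \(A\subset(0,\infty)\), Fubini and the fact that each \(\mathsf P_{w,y}^{!}\) is a probability measure (Proposition~\ref{P:canonical-height-indexed-palm-family}(i)) give \(\widehat{\mathsf P}_w^{0,!}\{Y\in A\}=\int_A\mathsf P_{w,y}^{!}(\mathcal N_{\mathbb H})\,\nu_w(\mathrm dy)=\nu_w(A)\). The measurability needed for Fubini, and for \eqref{eq:joint-height-marked-palm-law} to define a measure at all, is exactly the probability-kernel property established in Proposition~\ref{P:canonical-height-indexed-palm-family}(i) via weak continuity.

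\emph{Conditional law.} For bounded measurable \(h\) on \((0,\infty)\) and bounded measurable \(G\) on \(\mathcal N_{\mathbb H}\), the product form gives
\[
\widehat{\mathbb E}_w^{0,!}[h(Y)G(\xi)]=\int_0^\infty h(y)\int G\,\mathrm d\mathsf P_{w,y}^{!}\,\nu_w(\mathrm dy)=\widehat{\mathbb E}_w^{0,!}\Bigl[h(Y)\int G\,\mathrm d\mathsf P_{w,Y}^{!}\Bigr].
\]
This is the defining property of a regular conditional distribution of \(\xi\) given \(Y\), so \(y\mapsto\mathsf P_{w,y}^{!}\) is a version of it.

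\emph{Law of \(X^{!}\).} For bounded measurable \(F\) on \(\mathcal N_{\mathbb R}\), Lemma~\ref{L:canonical-palm-projection} ensures that \(X^{!}=p_\#\xi\) holds \(\mathsf P_{w,y}^{!}\)-almost surely for every \(y\). Consequently \(\widehat{\mathbb E}_w^{0,!}[F(X^{!})]=\int_0^\infty\mathbb E_{M_w}^{iy,!}[F(p_\#M_w)]\,\nu_w(\mathrm dy)\), using the same measurable extension of \(F\circ p_\#\) as in the proof of Theorem~\ref{T:projected-palm-disintegration}. By that theorem this equals \(\mathbb E_w^{0,!}[F(X_w)]\). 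Taking \(F\) to be indicators of measurable sets identifies the law of \(X^{!}\) as \(\mathbb P_w^{0,!}\).

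The only step that needs any care is measurability: \(X^{!}\) must be a measurable map into \(\mathcal N_{\mathbb R}\), which requires the exceptional set \(\{p_\#\xi\notin\mathcal N_{\mathbb R}\}\) to be measurable. I would handle this through the counting-map description already used in the proof of Theorem~\ref{T:projected-palm-disintegration}. The set in question is described by countably many events of the form \(\{p_\#\xi([-n,n])=\infty\}\) together with multiplicity events over rational intervals, and each of these is measurable.
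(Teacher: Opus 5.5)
Your proposal is correct and follows the paper's own route. The marginal of \(Y\) and the conditional kernel are read off from the product form \eqref{eq:joint-height-marked-palm-law}, and the law of \(X^{!}\) comes from Theorem~\ref{T:projected-palm-disintegration} combined with Lemma~\ref{L:canonical-palm-projection}; you only spell out the Fubini and measurability details that the paper's two-line proof leaves implicit.
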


\begin{proof}
The assertions concerning \(Y\) and the conditional law of \(\xi\) follow from \eqref{eq:joint-height-marked-palm-law}.
Theorem~\ref{T:projected-palm-disintegration} identifies the law of \(X^{!}\) with \(\mathbb P_w^{0,!}\).
\end{proof}

\begin{remark}[Projection and Palm conditioning]
\label{R:fixed-hidden-height-determinantal}
For each \(y>0\), \(\mathsf P_{w,y}^{!}\) is determinantal on \(\mathbb H\), but its horizontal projection is not asserted to be determinantal.
Theorem~\ref{T:projected-palm-disintegration} is a Campbell--Mecke disintegration, not ordinary conditioning on the null event \(\{X_w(\{0\})\ge1\}\), and does not assert that \(Y\) is a measurable function of \(X^{!}\).
\end{remark}

\subsection{Conditional height distributions}
\label{subsec:projected-palm-intensity}

The preceding disintegration couples the hidden Palm height with the projected reduced Palm configuration.
We derive factorial moment measures for the projected reduced Palm law from \(\rho_w^{(k)}\), and then use the marked second factorial Campbell measure to identify the conditional joint law of two height marks given their horizontal displacement.

\begin{corollary}
\label{C:projected-palm-intensity}
The intensity measure of the projected reduced Palm law is absolutely continuous, with density \(\lambda_w^{0,!}(r)=\rho_w^{(2)}(r)/\lambda_w\).
\end{corollary}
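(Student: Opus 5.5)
The plan is to compute the intensity measure of \(\mathbb P_w^{0,!}\) directly from the projected Campbell identity \eqref{eq:projected-reduced-palm-campbell}, using the second factorial moment density of \(X_w\) from Proposition~\ref{P:projected-factorial-intensities}. By the Palm characterization, the intensity of the reduced Palm law is determined by a bounded Borel set \(A\subset\mathbb R\). For such \(A\), I would apply \eqref{eq:projected-reduced-palm-campbell} with the counting functional \(F(\eta)=\eta(A)\).

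Since \(\eta(A)\) is unbounded, I would first use \(F_n(\eta)=\min(\eta(A),n)\) and pass to the limit by monotone convergence on both sides. Alternatively, one can note that \eqref{eq:projected-reduced-palm-campbell} extends to nonnegative measurable \(F\) by the standard monotone class argument.

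Fix a bounded Borel set \(B\) with \(|B|>0\). The left side becomes
\[
\mathbb E\sum_{x\in X_w}\mathbf 1_B(x)\,(X_w-\delta_x)(A+x)
=\mathbb E\sum_{x_1,x_2\in X_w}^{\ne}\mathbf 1_B(x_1)\mathbf 1_A(x_2-x_1).
\]
This uses simplicity of \(X_w\) (Lemma~\ref{L:projected-simplicity}): removing \(\delta_x\) leaves exactly the other points, so the sum is over ordered distinct pairs.

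By Proposition~\ref{P:projected-factorial-intensities}, this equals
\[
\int_B\int_{\mathbb R}\mathbf 1_A(x_2-x_1)\,\rho_w^{(2)}(x_1,x_2)\,\mathrm dx_2\,\mathrm dx_1 .
\]
Diagonal translation invariance gives \(\rho_w^{(2)}(x_1,x_2)=\rho_w^{(2)}(x_2-x_1)\). Substituting \(r=x_2-x_1\) and applying Tonelli, the integral becomes
\[
|B|\int_A\rho_w^{(2)}(r)\,\mathrm dr .
\]
This is finite, since \(\rho_w^{(2)}\le\lambda_w^2\) and \(A\) is bounded.

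Comparing with the right side, \(|B|\lambda_w\,\mathbb E_w^{0,!}[X_w(A)]\), and dividing by \(|B|\lambda_w>0\) gives
\[
\mathbb E_w^{0,!}[X_w(A)]=\int_A\rho_w^{(2)}(r)/\lambda_w\,\mathrm dr .
\]
Hence the intensity measure is absolutely continuous with the stated density. Continuity and positivity of the density, from Proposition~\ref{P:projected-covariance-transform}, make \(\rho_w^{(2)}/\lambda_w\) a natural continuous representative.

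The only delicate step is extending \eqref{eq:projected-reduced-palm-campbell} from bounded \(F\) to the unbounded counting functional. I would handle it via the truncations \(F_n\) and monotone convergence, as above. Measurability of \(\eta\mapsto\eta(A)\) on \(\mathcal N_{\mathbb R}\) is standard for the vague Borel \(\sigma\)-field.
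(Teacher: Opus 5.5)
Your proposal is correct and follows the paper's proof essentially step for step: you apply the projected Campbell identity \eqref{eq:projected-reduced-palm-campbell} to \(F(\eta)=\eta(A)\), use simplicity (Lemma~\ref{L:projected-simplicity}) to turn the left side into a second factorial sum, evaluate it with the stationary density \(\rho_w^{(2)}\), and divide by \(|B|\lambda_w\). Your truncation \(F_n=\min(\eta(A),n)\) with monotone convergence makes explicit a step the paper leaves implicit in this proof, since the Campbell identity is stated only for bounded \(F\); the paper uses the same truncation device explicitly in Lemma~\ref{L:projected-palm-factorial-moments}.
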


\begin{proof}
Let \(A,B\subset\mathbb R\) be bounded Borel sets with \(|B|>0\).
By \eqref{eq:projected-reduced-palm-campbell}, Lemma~\ref{L:projected-simplicity}, and the factorial moment formula in Proposition~\ref{P:projected-factorial-intensities},
\[
\begin{aligned}
|B|\lambda_w\,\mathbb E_w^{0,!}X_w(A)
&=
\mathbb E
\sum_{x_0\in X_w}
\mathbf 1_B(x_0)
\left(\theta_{x_0}(X_w-\delta_{x_0})\right)(A)
\\
&=
\mathbb E
\sum_{x_0,x_1\in X_w}^{\ne}
\mathbf 1_B(x_0)\mathbf 1_A(x_1-x_0)
=
|B|\int_A\rho_w^{(2)}(r)\,\mathrm dr.
\end{aligned}
\]
Dividing by \(|B|\lambda_w\) proves the stated density formula.
\end{proof}

\begin{lemma}
\label{L:projected-palm-factorial-moments}
Let \(q\ge1\) and let \(f:\mathbb R^q\to[0,\infty]\) be measurable.
Then
\[
\mathbb E_w^{0,!}
\sum_{u_1,\ldots,u_q\in X_w}^{\ne}
f(u_1,\ldots,u_q)
=
\frac{1}{\lambda_w}
\int_{\mathbb R^q}
f(r_1,\ldots,r_q)
\rho_w^{(q+1)}(0,r_1,\ldots,r_q)
\,\mathrm dr_1\cdots\mathrm dr_q,
\]
where \(\rho_w^{(q+1)}\) is the continuous \((q+1)\)-point factorial moment density of \(X_w\) specified in Proposition~\ref{P:projected-factorial-intensities}.
\end{lemma}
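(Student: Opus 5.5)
The plan is to follow the proof of Corollary~\ref{C:projected-palm-intensity}, replacing the single indicator \(\mathbf 1_A\) by a general \(q\)-variable test function. Fix a bounded Borel set \(B\subset\mathbb R\) with \(|B|>0\). First assume \(f\) is bounded and compactly supported; the general nonnegative measurable case then follows by monotone convergence applied to \(f_n=\min(f,n)\mathbf 1_{[-n,n]^q}\), since both sides are monotone limits.

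For such \(f\), define the functional
\[
F(\xi)=\sum_{u_1,\ldots,u_q\in\xi}^{\ne}f(u_1,\ldots,u_q)
\]
on \(\mathcal N_{\mathbb R}\). It is nonnegative and measurable, but possibly unbounded. The Campbell identity \eqref{eq:projected-reduced-palm-campbell} is stated for bounded \(F\). I would extend it to nonnegative measurable \(F\) by applying it to \(\min(F,n)\) and using monotone convergence on both sides. Applying the extended identity gives
\[
|B|\lambda_w\,\mathbb E_w^{0,!}F(X_w)
=\mathbb E\sum_{x_0\in X_w}\mathbf 1_B(x_0)\sum_{x_1,\ldots,x_q\in X_w\setminus\{x_0\}}^{\ne}f(x_1-x_0,\ldots,x_q-x_0).
\]
Here simplicity (Lemma~\ref{L:projected-simplicity}) is used so that \(\theta_{x_0}(X_w-\delta_{x_0})\) is the configuration of the remaining points shifted by \(-x_0\). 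The right side is then a \((q+1)\)-fold factorial sum over pairwise distinct points of \(X_w\).

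By the factorial moment formula in Proposition~\ref{P:projected-factorial-intensities} with \(k=q+1\), extended from compactly supported to general nonnegative functions by monotone convergence, this sum equals
\[
\int_{\mathbb R^{q+1}}\mathbf 1_B(x_0)\,f(x_1-x_0,\ldots,x_q-x_0)\,\rho_w^{(q+1)}(x_0,\ldots,x_q)\,\mathrm dx_0\cdots\mathrm dx_q .
\]
Now substitute \(r_j=x_j-x_0\). Diagonal translation invariance gives \(\rho_w^{(q+1)}(x_0,x_0+r_1,\ldots)=\rho_w^{(q+1)}(0,r_1,\ldots,r_q)\). Applying Tonelli, the integral becomes \(|B|\int f(r)\rho_w^{(q+1)}(0,r)\,\mathrm dr\). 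Dividing by \(|B|\lambda_w\), which is positive and finite, gives the claim.

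The main obstacle is purely technical: justifying the Campbell identity for unbounded \(F\), and handling the case where both sides may be infinite. Truncation and monotone convergence settle this, since every quantity involved is nonnegative, so no finiteness assumption is needed. The boundedness \(\rho_w^{(q+1)}\le\lambda_w^{q+1}\) additionally shows that both sides are finite whenever \(f\) is integrable.
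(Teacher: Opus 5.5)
Your proposal is correct and follows essentially the same route as the paper. Both arguments apply the reduced Palm Campbell identity to level-\(n\) truncations of the factorial-sum functional, use simplicity to turn the result into a \((q+1)\)-fold factorial sum, and evaluate it via the factorial moment formula, diagonal translation invariance and the substitution \(x_j=x_0+r_j\), before passing from bounded compactly supported \(f\) to general \(f\) by monotone convergence.
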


\begin{proof}
It suffices first to consider bounded compactly supported \(f\).
For a bounded Borel set \(B\subset\mathbb R\) with \(|B|>0\), apply \eqref{eq:projected-reduced-palm-campbell} to the level-\(n\) truncations of the factorial-sum functional in the statement.
Monotone convergence, Lemma~\ref{L:projected-simplicity}, and diagonal translation invariance in Proposition~\ref{P:projected-factorial-intensities}, followed by \(x_0=x\) and \(x_j=x+r_j\), give
\[
\begin{aligned}
|B|\lambda_w\,
\mathbb E_w^{0,!}
\sum_{u_1,\ldots,u_q\in X_w}^{\ne}
f(u_1,\ldots,u_q)
&=
\mathbb E
\sum_{x_0,\ldots,x_q\in X_w}^{\ne}
\mathbf 1_B(x_0)
f(x_1-x_0,\ldots,x_q-x_0)
\\
&=
|B|\int_{\mathbb R^q}
f(r_1,\ldots,r_q)
\rho_w^{(q+1)}(0,r_1,\ldots,r_q)
\,\mathrm dr_1\cdots\mathrm dr_q.
\end{aligned}
\]
Division by \(|B|\lambda_w\) proves the formula for bounded compactly supported \(f\).
Applying it to \(f_m=(f\wedge m)\mathbf 1_{[-m,m]^q}\) and letting \(m\to\infty\) proves the general case.
\end{proof}

\paragraph{Conditional kernel for a pair.}
\label{subsec:hidden-two-mark-law}

The second factorial Campbell measure admits a canonical disintegration over the horizontal displacement.

\begin{proof}[Proof of Proposition~\ref{P:hidden-two-mark-law}]
For \(r\in\mathbb R\), define
\[
\Gamma_{w,r}(\mathrm dy_0\,\mathrm dy_1)
=
w(y_0)w(y_1)
\rho_{\mathbb H}^{(2)}(iy_0,r+iy_1)
\,\mathrm dy_0\,\mathrm dy_1.
\]
The two-point determinantal formula and \eqref{eq:hardy-szego-kernel} give
\[
\rho_{\mathbb H}^{(2)}(iy_0,r+iy_1)
=
\frac{1}{16\pi^2y_0^2y_1^2}
-
\frac{1}{\pi^2\bigl(r^2+(y_0+y_1)^2\bigr)^2}.
\]
By Proposition~\ref{P:projected-factorial-intensities}, the total mass of \(\Gamma_{w,r}\) is \(\rho_w^{(2)}(r)\), which is positive by Proposition~\ref{P:projected-covariance-transform}.
Thus \(\mathsf Q_{w,r}=\Gamma_{w,r}/\rho_w^{(2)}(r)\) is a probability measure.

The factorial moment formula in Lemma~\ref{L:height-thinning-kernel} and the change of variables \(x=\operatorname{Re}z_0\) and \(r=\operatorname{Re}z_1-\operatorname{Re}z_0\) give
\[
\begin{aligned}
&\mathbb E\sum_{z_0,z_1\in M_w}^{\ne}
H\bigl(\operatorname{Re}z_0,
       \operatorname{Re}z_1-\operatorname{Re}z_0,
       \operatorname{Im}z_0,\operatorname{Im}z_1\bigr)
\\
&\quad=
\int_{\mathbb R}\int_{\mathbb R}
\int_{(0,\infty)^2}
H(x,r,y_0,y_1)
\,\Gamma_{w,r}(\mathrm dy_0\,\mathrm dy_1)
\,\mathrm dr\,\mathrm dx.
\end{aligned}
\]
Substituting \(\Gamma_{w,r}=\rho_w^{(2)}(r)\mathsf Q_{w,r}\) gives the asserted disintegration.

If \(r_n\to r\), the densities of \(\Gamma_{w,r_n}\) converge pointwise to that of \(\Gamma_{w,r}\).
They are supported on \([\alpha,\beta]^2\) and, by \eqref{eq:hardy-szego-hadamard-bound}, are uniformly bounded there by \((16\pi^2\alpha^4)^{-1}\).
Dominated convergence therefore gives
\(
\|\Gamma_{w,r_n}-\Gamma_{w,r}\|_{\mathrm{TV}}\longrightarrow0.
\)
Taking total masses and then normalizing yields
\[
\|\mathsf Q_{w,r_n}-\mathsf Q_{w,r}\|_{\mathrm{TV}}\longrightarrow0.
\]

Almost-everywhere uniqueness of disintegration kernels implies that any other weakly continuous version of the conditional height kernel agrees with \(\mathsf Q_{w,r}\) for \(\rho_w^{(2)}(r)\,\mathrm dr\)-almost every \(r\).
Since \(\rho_w^{(2)}(r)>0\) for every \(r\), the agreement set is dense, and weak continuity extends the equality to every \(r\), proving uniqueness.
\end{proof}

The conditional height kernel in Proposition~\ref{P:hidden-two-mark-law} has the following limiting laws at vanishing and diverging horizontal separation.

\begin{corollary}
\label{C:far-hidden-height-limit}
In total variation,
\[
\mathsf Q_{w,r}\longrightarrow\nu_w\otimes\nu_w
\quad\text{as }|r|\to\infty,
\qquad
\mathsf Q_{w,r}\longrightarrow\mathsf Q_{w,0}
\quad\text{as }r\to0.
\]
The measure \(\mathsf Q_{w,0}\) has density
\begin{equation}
\label{eq:zero-displacement-height-density}
\frac{w(y_0)w(y_1)}{\rho_w^{(2)}(0)}
\left[
\frac{1}{16\pi^2y_0^2y_1^2}
-
\frac{1}{\pi^2(y_0+y_1)^4}
\right]
\end{equation}
with respect to \(\mathrm dy_0\,\mathrm dy_1\).
\end{corollary}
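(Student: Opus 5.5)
The plan is to work directly with the densities of the unnormalized measures \(\Gamma_{w,r}\) from the proof of Proposition~\ref{P:hidden-two-mark-law}, and to use the elementary fact that if nonnegative densities \(f_n\to f\) in \(L^1\) with \(\int f>0\), then \(f_n/\int f_n\to f/\int f\) in \(L^1\). The zero-separation limit is already contained in that proposition. Total-variation continuity of \(r\mapsto\mathsf Q_{w,r}\) at \(r=0\) gives \(\mathsf Q_{w,r}\to\mathsf Q_{w,0}\). Evaluating the density of \(\mathsf Q_{w,r}\) at \(r=0\) yields exactly \eqref{eq:zero-displacement-height-density}, with normalizer \(\rho_w^{(2)}(0)>0\) by Proposition~\ref{P:projected-covariance-transform}.

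For \(|r|\to\infty\), write the density of \(\Gamma_{w,r}\) as \(w(y_0)w(y_1)\bigl[(16\pi^2y_0^2y_1^2)^{-1}-\pi^{-2}(r^2+(y_0+y_1)^2)^{-2}\bigr]\). On \([\alpha,\beta]^2\) the subtracted term is bounded by \(\pi^{-2}r^{-4}\), uniformly in the heights. Hence the \(L^1\)-distance between this density and the product density \(w(y_0)w(y_1)(16\pi^2y_0^2y_1^2)^{-1}\) is at most \(\pi^{-2}r^{-4}(\beta-\alpha)^2\to0\).

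The product density has total mass \(\lambda_w^2>0\), and its normalization is precisely \(\nu_w\otimes\nu_w\), by \eqref{eq:palm-height-mixing-measure} and \eqref{eq:projected-intensity}. The total masses \(\rho_w^{(2)}(r)=\lambda_w^2+g_w(r)\) converge to \(\lambda_w^2\), since \(|g_w(r)|\le\pi^{-2}r^{-4}(\beta-\alpha)^2\) by Proposition~\ref{P:projected-covariance-transform}. The normalization lemma then gives \(\|\mathsf Q_{w,r}-\nu_w\otimes\nu_w\|_{\mathrm{TV}}=O(r^{-4})\).

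The only step needing care is the normalization estimate, namely \(\|f_n/c_n-f/c\|_1\le \|f_n-f\|_1/c_n+\|f\|_1|c_n^{-1}-c^{-1}|\). It is routine, so I expect no genuine obstacle; the main point is just to keep the bounds uniform via the support restriction \(y_0+y_1\ge2\alpha\).
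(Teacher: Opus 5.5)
Your proposal is correct and follows the paper's proof: the zero-separation limit and the density of \(\mathsf Q_{w,0}\) come directly from the total-variation continuity and the definition in Proposition~\ref{P:hidden-two-mark-law}, and the far limit comes from \(\rho_w^{(2)}(r)\mathsf Q_{w,r}\to\lambda_w^2(\nu_w\otimes\nu_w)\) in total variation followed by normalization. The only difference is that you replace the paper's dominated-convergence step with the explicit bound \(\pi^{-2}r^{-4}(\beta-\alpha)^2\), which additionally gives the rate \(\|\mathsf Q_{w,r}-\nu_w\otimes\nu_w\|_{\mathrm{TV}}=O(r^{-4})\).
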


\begin{proof}
As \(|r|\to\infty\), dominated convergence, \eqref{eq:palm-height-mixing-measure}, and \eqref{eq:projected-intensity} give
\[
\rho_w^{(2)}(r)\mathsf Q_{w,r}
\longrightarrow
\frac{w(y_0)w(y_1)}
     {16\pi^2y_0^2y_1^2}
\,\mathrm dy_0\,\mathrm dy_1
=
\lambda_w^2(\nu_w\otimes\nu_w)
\]
in total variation.
Taking total masses gives \(\rho_w^{(2)}(r)\to\lambda_w^2\), and normalization yields the first convergence.
The second follows from the total-variation continuity in Proposition~\ref{P:hidden-two-mark-law}; setting \(r=0\) gives the stated density.
\end{proof}

The bracketed factor in \eqref{eq:zero-displacement-height-density} admits the factorization
\[
\frac{1}{16\pi^2y_0^2y_1^2}
-
\frac{1}{\pi^2(y_0+y_1)^4}
=
\frac{(y_0-y_1)^2(y_0^2+6y_0y_1+y_1^2)}
     {16\pi^2y_0^2y_1^2(y_0+y_1)^4}.
\]
Consequently, the density of \(\mathsf Q_{w,0}\) vanishes precisely when \(y_0=y_1\) or \(w(y_0)w(y_1)=0\).
Here \(\mathsf Q_{w,0}\) is the value at \(r=0\) of the total-variation continuous version fixed by Proposition~\ref{P:hidden-two-mark-law}.

\subsection{Right nearest-neighbour small-spacing asymptotic}
\label{subsec:linear-right-short-gap}

Recall that, under \(\mathbb P_w^{0,!}\),
\[
S_w
=
\inf\bigl(\operatorname{supp}X_w\cap(0,\infty)\bigr),
\qquad
\inf\varnothing:=\infty.
\]
By Corollary~\ref{C:projected-palm-intensity}, the reduced Palm configuration has no point at any fixed location, so the choice of open or closed endpoints below is immaterial.

\begin{proof}[Proof of Theorem~\ref{T:linear-right-short-gap}]
For \(t>0\), set \(N_t=X_w((0,t))\).
Then \(\mathbb P_w^{0,!}\{S_w\le t\}=\mathbb P_w^{0,!}\{N_t\ge1\}\), and the first two Bonferroni inequalities give
\[
\mathbb E_w^{0,!}N_t
-\frac12\mathbb E_w^{0,!}[N_t(N_t-1)]
\le
\mathbb P_w^{0,!}\{S_w\le t\}
\le
\mathbb E_w^{0,!}N_t.
\]

By \eqref{eq:projected-two-point-density}, \(\rho_w^{(2)}\) is even.
Since \(y+v\ge2\alpha\) on the support of \(w(y)w(v)\), differentiation under the integral sign gives \(\rho_w^{(2)}\in C^2\) near \(0\).
Hence
\[
\rho_w^{(2)}(r)
=
\rho_w^{(2)}(0)+O_w(r^2).
\]
By Corollary~\ref{C:projected-palm-intensity},
\[
\mathbb E_w^{0,!}N_t
=
\frac{1}{\lambda_w}\int_0^t\rho_w^{(2)}(r)\,\mathrm dr
=
\frac{\rho_w^{(2)}(0)}{\lambda_w}\,t+O_w(t^3).
\]

Lemma~\ref{L:projected-palm-factorial-moments}, applied with \(q=2\) and \(f=\mathbf 1_{(0,t)^2}\), and the bound \(0\le\rho_w^{(3)}\le\lambda_w^3\) from Proposition~\ref{P:projected-factorial-intensities} give
\[
\mathbb E_w^{0,!}[N_t(N_t-1)]
=
\frac{1}{\lambda_w}
\int_0^t\int_0^t
\rho_w^{(3)}(0,r,s)
\,\mathrm dr\,\mathrm ds
\le
\lambda_w^2t^2.
\]
Combining these estimates with the Bonferroni inequalities gives the asserted asymptotic.
Its coefficient is strictly positive because \(\lambda_w>0\) by \eqref{eq:projected-intensity} and \(\rho_w^{(2)}(0)>0\) by Proposition~\ref{P:projected-covariance-transform}.
\end{proof}

\section{Identifiability and stable finite-union reconstruction}
\label{sec:stieltjes-laplace-identifiability}

This section proves Theorems~\ref{T:stieltjes-laplace-identifiability} and \ref{T:finite-union-stability}.
The former establishes identifiability for arbitrary finite positive measures compactly supported in \((0,\infty)\), while the latter gives Lipschitz stability on separated compact classes of finite-union profiles.

\subsection{Identifiability of compactly supported height measures}
\label{subsec:covariance-transform-general-measures}

For a finite positive measure \(\mu\) compactly supported in \((0,\infty)\), recall the covariance transform \(G_\mu\) defined by \eqref{eq:height-measure-covariance-transform}.
If \(\mu_w(\mathrm dy)=w(y)\,\mathrm dy\), then Proposition~\ref{P:projected-covariance-transform} gives
\[
g_w(r)=-\pi^{-2}G_{\mu_w}(r).
\]
Hence identifiability of \(\mu_w\) from \(G_{\mu_w}\) is equivalent to identifiability from the second factorial cumulant density of the projected process.

Writing \(\nu=\mu\ast\mu\), the definition of convolution gives
\begin{equation}
\label{eq:covariance-transform-convolution}
G_\mu(r)
=
\int_0^\infty
\frac{1}{(r^2+s^2)^2}
\,\nu(\mathrm ds).
\end{equation}
The inverse argument first recovers \(\nu\) from \(G_\mu\) by Stieltjes uniqueness and then recovers \(\mu\) from \(\nu\) by Laplace uniqueness, with positivity selecting the square root.

The following lemma provides the analyticity needed to extend equality of covariance transforms from an interval to all \(r>0\).

\begin{lemma}
\label{L:covariance-bilinear-analyticity}
Let \(\sigma\) and \(\tau\) be finite signed measures supported in \([a,b]\subset(0,\infty)\).
Then
\[
\mathcal B_{\sigma,\tau}(z)
=
\int_{[a,b]}\int_{[a,b]}
\frac{\sigma(\mathrm dy)\tau(\mathrm dv)}
     {\bigl(z^2+(y+v)^2\bigr)^2}
\]
is holomorphic on
\(
\{z\in\mathbb C:|\operatorname{Im}z|<2a\}.
\)
\end{lemma}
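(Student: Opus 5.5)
The plan is the standard route: holomorphy of a parameter-dependent integral with a locally bounded integrand. Put \(s=y+v\), so \(s\in[2a,2b]\) on the support of \(\sigma\otimes\tau\). Write \(D=\{z\in\mathbb C:|\operatorname{Im}z|<2a\}\).

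\emph{Step 1: the integrand has no poles in \(D\).} For \(z=x+i\eta\in D\) and \(s\ge 2a\),
\[
z^2+s^2=(z-is)(z+is),
\qquad
|z\mp is|\ge s-|\eta|\ge 2a-|\eta|>0 .
\]
Hence \(z\mapsto (z^2+s^2)^{-2}\) is holomorphic on \(D\) for each fixed \(s\in[2a,2b]\). Moreover, on any compact \(K\subset D\) with \(\sup_K|\operatorname{Im}z|\le 2a-\epsilon\), we have \(|z^2+s^2|\ge\epsilon^2\). This gives
\[
\bigl|(z^2+s^2)^{-2}\bigr|\le \epsilon^{-4},
\]
uniformly in \(z\in K\) and \(s\in[2a,2b]\). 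The integrand is also jointly continuous in \((z,y,v)\), so \(\mathcal B_{\sigma,\tau}\) is well defined and finite on \(D\).

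\emph{Step 2: holomorphy.} Work with the total variation measure \(|\sigma|\otimes|\tau|\), which is finite. I would argue in one of two equivalent ways.

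The first is continuity plus Morera. Dominated convergence with the constant bound \(\epsilon^{-4}\) gives continuity of \(\mathcal B_{\sigma,\tau}\) on \(K\). For any closed triangle \(T\subset D\), Fubini (justified by the same uniform bound on the compact set \(\partial T\)) exchanges \(\oint_{\partial T}\) with the \((y,v)\)-integral. Cauchy--Goursat for the holomorphic integrand makes each inner contour integral vanish, and Morera's theorem then gives holomorphy.

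The second is direct differentiation under the integral sign. The \(z\)-derivative \(-4z(z^2+s^2)^{-3}\) is bounded on \(K\) by \(4\sup_K|z|\,\epsilon^{-6}\), and this bound is integrable against \(|\sigma|\otimes|\tau|\).

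\emph{Main obstacle.} There is no real obstacle. The only care needed is the lower bound \(|z^2+s^2|\ge (s-|\operatorname{Im}z|)^2\), which is exactly what makes the strip width \(2a\) the natural domain, together with noting that signed measures are handled through their total variations when applying Fubini and dominated convergence.
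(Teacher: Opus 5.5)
Your proposal is correct and follows essentially the same route as the paper: a uniform positive lower bound on $|z^2+s^2|$ over $K\times[2a,2b]$ for compact $K$ in the strip, which bounds the integrand and its $z$-derivative, followed by differentiation under the integral sign. Your Morera argument is an equivalent alternative, and your explicit estimate $|z^2+s^2|\ge(s-|\operatorname{Im}z|)^2$ simply makes concrete the constant $c_K$ that the paper obtains from positivity and compactness.
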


\begin{proof}
For fixed \(y,v\in[a,b]\), the integrand is holomorphic on the stated strip, since its poles are \(z=\pm i(y+v)\).
Moreover, for every compact subset \(K\) of the strip,
\[
c_K
:=
\inf_{\substack{z\in K\\s\in[2a,2b]}}
|z^2+s^2|
>0.
\]
Thus the integrand is bounded by \(c_K^{-2}\), which is integrable with respect to the finite measure \(|\sigma|\otimes|\tau|\).
The same holds for the \(z\)-derivative, so differentiation under the integral sign proves the claim.
\end{proof}

We record the Stieltjes and Laplace transform uniqueness statements needed below for compactly supported measures.
The corresponding results for nonnegative measures are given in \cite[Proposition~1.2 and the discussion following Definition~2.1]{SchillingSongVondracek2012}; see also \cite[Chapter~II, Theorem~6.3, and Chapter~VIII, Theorem~5b]{Widder1941} for the classical formulations in terms of functions of bounded variation.
The extension from nonnegative to finite signed measures follows immediately from the Jordan decomposition, so we state the results without proof.

\begin{lemma}[Stieltjes transform uniqueness]
\label{L:stieltjes-uniqueness-compact}
Let \(\sigma\) be a finite signed measure supported in a compact subset of \([0,\infty)\).
If \(\int_0^\infty(t+q)^{-1}\,\sigma(\mathrm dq)=0\) for every \(t>0\), then \(\sigma=0\).
\end{lemma}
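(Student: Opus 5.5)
The plan is to avoid the general Stieltjes inversion theory and use compact support directly, through analytic continuation and moments. Suppose \(\sigma\) is supported in \([0,L]\) for some \(L<\infty\), and define
\[
S(t)=\int_{[0,L]}\frac{\sigma(\mathrm dq)}{t+q},
\qquad t\in\mathbb C\setminus[-L,0].
\]

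First, show that \(S\) is holomorphic on the connected open set \(\mathbb C\setminus[-L,0]\). This follows as in Lemma~\ref{L:covariance-bilinear-analyticity}. On any compact subset \(K\) of this domain, the quantity \(\inf_{t\in K,\,q\in[0,L]}|t+q|\) is positive. Hence the integrand and its \(t\)-derivative are uniformly bounded there, and these bounds are integrable against the finite measure \(|\sigma|\). Differentiation under the integral sign then applies. By hypothesis \(S\) vanishes on \((0,\infty)\), a set with accumulation points in the domain. The identity theorem therefore gives \(S\equiv0\) on all of \(\mathbb C\setminus[-L,0]\), and in particular for every \(|t|>L\).

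Second, expand \(S\) at infinity. For \(|t|>L\) and \(q\in[0,L]\) we have \(|q/t|\le L/|t|<1\), so
\[
\frac{1}{t+q}=\sum_{n\ge0}(-1)^n\frac{q^n}{t^{n+1}}.
\]
This series converges uniformly in \(q\), which justifies integrating term by term against the finite measure \(|\sigma|\). The result is a convergent Laurent series
\[
S(t)=\sum_{n\ge0}(-1)^n m_n t^{-n-1},
\qquad
m_n=\int q^n\,\sigma(\mathrm dq).
\]
Since \(S\equiv0\) for \(|t|>L\), uniqueness of Laurent coefficients forces \(m_n=0\) for every \(n\ge0\).

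Finally, conclude from the vanishing moments. The polynomials are dense in \(C([0,L])\) in the supremum norm by the Weierstrass approximation theorem. A finite signed measure that annihilates all polynomials therefore annihilates all of \(C([0,L])\). By the Riesz representation theorem, which gives uniqueness of the measure representing a continuous linear functional on \(C([0,L])\), this yields \(\sigma=0\).

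No step is a serious obstacle. The only point needing care is the analytic-continuation step: the domain \(\mathbb C\setminus[-L,0]\) must be connected, which it is because \([-L,0]\) is a compact segment, and \((0,\infty)\) must lie inside it, which holds because \(q\ge0\). A possible atom of \(\sigma\) at \(q=0\) causes no trouble, since it contributes only a multiple of \(t^{-1}\), which is holomorphic off the origin. This argument also sidesteps the Jordan-decomposition reduction to the nonnegative case mentioned before the lemma.
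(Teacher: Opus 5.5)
Your proof is correct, but it differs from the paper, which gives no proof of this lemma. The paper cites the classical Stieltjes uniqueness theorem for nonnegative measures (Schilling--Song--Vondra\v{c}ek; Widder, Chapter~VIII). It then passes to signed measures by the Jordan decomposition: equal transforms of $\sigma^+$ and $\sigma^-$ force $\sigma^+=\sigma^-$, and mutual singularity gives $\sigma=0$.

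You instead use compact support to reduce the statement to the Hausdorff moment problem on $[0,L]$. The expansion of $(t+q)^{-1}$ in powers of $1/t$ converges uniformly in $q$ for $|t|>L$, so vanishing of the transform kills every moment $m_n$. Weierstrass density together with Riesz uniqueness then finishes.

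Your route is self-contained and elementary, and it handles signed measures directly. The cited route is more general: it covers measures on $[0,\infty)$ with unbounded support, where the expansion at infinity is unavailable and moments need not exist or determine the measure.

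Under the hypothesis as stated, your analytic-continuation step is not needed. The hypothesis already covers every real $t>L$, and a series in $1/t$ vanishing for all real $t>L$ has zero coefficients: $m_0=\lim_{t\to\infty}tS(t)$, and the higher coefficients follow inductively. The step is worth keeping only for a stronger conclusion: vanishing on any set with an accumulation point in $\mathbb C\setminus[-L,0]$, such as a bounded open subinterval of $(0,\infty)$, already forces $\sigma=0$.
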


\begin{lemma}[Laplace transform uniqueness]
\label{L:laplace-uniqueness-compact}
Let \(\sigma\) be a finite signed measure supported in a compact subset of \([0,\infty)\).
If \(\int_0^\infty e^{-\lambda y}\,\sigma(\mathrm dy)=0\) for every \(\lambda>0\), then \(\sigma=0\).
\end{lemma}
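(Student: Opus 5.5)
The plan is to reduce the statement to the vanishing of a moment sequence and then use Weierstrass approximation together with uniqueness in the Riesz representation theorem. We may assume \(\sigma\) is supported in \([0,c]\) for some \(c<\infty\). Let \(\phi(y)=e^{-y}\), which maps \([0,c]\) homeomorphically onto \(J=[e^{-c},1]\), and let \(\pi=\phi_\#\sigma\), a finite signed measure on \(J\). Specializing the hypothesis to \(\lambda=n\in\{1,2,\ldots\}\) gives
\[
\int_J x^n\,\pi(\mathrm dx)=\int_0^\infty e^{-ny}\,\sigma(\mathrm dy)=0,
\qquad n\ge1 .
\]

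\textbf{Step 1: density of the test class.} The main point is that only moments of order \(n\ge1\) are available, so the constant function is not directly covered. This is harmless because \(J\) is bounded away from \(0\). Given \(f\in C(J)\), the function \(f(x)/x\) is continuous on \(J\). By the Weierstrass theorem it is the uniform limit on \(J\) of polynomials \(p_k\). Then \(x\,p_k(x)\to f(x)\) uniformly on \(J\), since \(|x|\le1\) there. Each \(x\,p_k(x)\) is a linear combination of monomials \(x^n\) with \(n\ge1\), so
\[
\int_J x\,p_k(x)\,\pi(\mathrm dx)=0 .
\]
Because \(|\pi|(J)<\infty\), uniform convergence lets us pass to the limit and obtain \(\int_J f\,\mathrm d\pi=0\) for every \(f\in C(J)\).

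\textbf{Step 2: conclusion.} By uniqueness in the Riesz representation theorem for finite signed Borel measures on the compact metric space \(J\), we get \(\pi=0\). Since \(\phi\) is a Borel isomorphism of \([0,c]\) onto \(J\), we have \(\sigma=(\phi^{-1})_\#\pi=0\).

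\textbf{Alternative route.} One can instead note that \(\lambda\mapsto\int e^{-\lambda y}\,\sigma(\mathrm dy)\) is entire because \(\sigma\) has compact support. Vanishing on \((0,\infty)\) then forces vanishing on \(i\mathbb R\) by the identity theorem, so the Fourier transform of \(\sigma\) is zero and \(\sigma=0\) by Fourier uniqueness for finite measures. Either way, the only point needing care is the one handled in Step 1: the hypothesis holds only for \(\lambda>0\) (in particular, not for the constant function corresponding to \(\lambda=0\)), and compact support in \([0,\infty)\) is what makes the moment problem on \(J\) determinate.
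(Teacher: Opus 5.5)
Your proof is correct, but it takes a different route from the paper, which gives no proof of this lemma. The paper cites the classical uniqueness theorem for Laplace transforms of nonnegative measures (Schilling, Song and Vondra\v{c}ek; Widder, Chapter~II). It then says the signed case follows from the Jordan decomposition: \(\int e^{-\lambda y}\,\sigma^+(\mathrm dy)=\int e^{-\lambda y}\,\sigma^-(\mathrm dy)\) for all \(\lambda>0\) forces \(\sigma^+=\sigma^-\), and mutual singularity gives \(\sigma=0\).

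You instead prove the statement from scratch and work with the signed measure directly. The substitution \(x=e^{-y}\) turns the hypothesis at \(\lambda\in\{1,2,\ldots\}\) into the vanishing of all moments of order \(n\ge1\) of \(\pi=\phi_\#\sigma\) on \(J=[e^{-c},1]\). Approximating \(f(x)/x\) recovers every \(f\in C(J)\) despite the missing zeroth moment, which works because \(J\) stays away from \(0\). Uniqueness in the Riesz representation theorem then gives \(\pi=0\). This is essentially the Lerch/Hausdorff moment argument behind the classical theorem the paper cites.

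What your route buys:
\begin{itemize}
\item It is self-contained.
\item It needs no Jordan-decomposition step.
\item It shows that vanishing at the positive integers alone already suffices.
\item It makes explicit where compact support is used.
\end{itemize}
The paper's route buys brevity, and through the cited results it covers finite measures on \([0,\infty)\) without any support restriction.

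Your alternative argument is also correct: with compact support the Laplace transform is entire, so it vanishes on \(i\mathbb R\), and Fourier uniqueness for finite signed measures finishes.
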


\begin{proof}[Proof of Theorem~\ref{T:stieltjes-laplace-identifiability}]
Choose \(0<a<b<\infty\) such that both measures are supported in \([a,b]\).
By Lemma~\ref{L:covariance-bilinear-analyticity}, \(G_\mu\) and \(G_{\widetilde\mu}\) are real analytic on \(\mathbb R\).
Their equality on a nonempty open subinterval therefore extends to all \(r>0\).

Set \(\nu=\mu\ast\mu\) and \(\widetilde\nu=\widetilde\mu\ast\widetilde\mu\).
These are finite positive measures supported in \([2a,2b]\).
For \(t>0\), define \(H_\mu(t)=\int_t^\infty G_\mu(\sqrt{u})\,\mathrm du\).
By \eqref{eq:covariance-transform-convolution} and Tonelli's theorem,
\[
H_\mu(t)
=
\int_0^\infty
\left(
\int_t^\infty\frac{\mathrm du}{(u+s^2)^2}
\right)
\nu(\mathrm ds)
=
\int_0^\infty\frac{\nu(\mathrm ds)}{t+s^2}.
\]
The analogous identity holds for \(\widetilde\mu\), and hence \(H_\mu=H_{\widetilde\mu}\) on \((0,\infty)\).

Let \(\eta\) and \(\widetilde\eta\) be the pushforwards of \(\nu\) and \(\widetilde\nu\), respectively, under \(s\mapsto s^2\).
These are finite positive measures supported in \([4a^2,4b^2]\), and
\[
\int_0^\infty
\frac{(\eta-\widetilde\eta)(\mathrm dq)}{t+q}
=
0,
\qquad t>0.
\]
Lemma~\ref{L:stieltjes-uniqueness-compact} gives \(\eta=\widetilde\eta\).
Since \(s\mapsto s^2\) is a bijection of \([0,\infty)\), it follows that \(\nu=\widetilde\nu\), and hence \(\mu\ast\mu=\widetilde\mu\ast\widetilde\mu\).

Taking Laplace transforms yields
\[
\left(
\int_0^\infty e^{-z y}\,\mu(\mathrm dy)
\right)^2
=
\left(
\int_0^\infty e^{-z y}\,\widetilde\mu(\mathrm dy)
\right)^2,
\qquad z>0.
\]
Since both integrals are nonnegative, they are equal.
Lemma~\ref{L:laplace-uniqueness-compact}, applied to \(\mu-\widetilde\mu\), therefore gives \(\mu=\widetilde\mu\).
\end{proof}

\subsection{Finite-union profiles and the covariance map}
\label{subsec:finite-union-exponential-sum}

For \(m\ge1\), recall that a finite-union profile is parametrized by
\[
I(\vartheta)
=
\bigcup_{j=1}^{m}[a_j,b_j],
\qquad
0<a_1<b_1<a_2<b_2<\cdots<a_m<b_m<\infty,
\]
where \(\vartheta=(a_1,b_1,\ldots,a_m,b_m)\in\mathbb R^{2m}\) and \(\mu_\vartheta(\mathrm dy)=\mathbf 1_{I(\vartheta)}(y)\,\mathrm dy\).
The identifiability theorem gives exact reconstruction within this class.

\begin{proposition}
\label{P:finite-union-exponential-sum}
The covariance transform \(G_{\mu_\vartheta}\) determines \(\mu_\vartheta\), and hence determines \(m\) and all endpoints \(a_j,b_j\).
Moreover,
\[
t\int_0^\infty e^{-t y}\,\mu_\vartheta(\mathrm dy)
=
\sum_{j=1}^{m}\bigl(e^{-a_jt}-e^{-b_jt}\bigr),
\qquad t>0.
\]
\end{proposition}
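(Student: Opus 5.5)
The plan is to split Proposition~\ref{P:finite-union-exponential-sum} into its two assertions. The first is a direct application of Theorem~\ref{T:stieltjes-laplace-identifiability}. The second is an elementary computation.

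\emph{Identifiability.} Let \(\vartheta\) and \(\widetilde\vartheta\) be two finite-union parameters, possibly with different numbers of intervals \(m\) and \(\widetilde m\). Then \(\mu_\vartheta\) and \(\mu_{\widetilde\vartheta}\) are finite positive measures supported in the common compact set \([\min(a_1,\widetilde a_1),\max(b_m,\widetilde b_{\widetilde m})]\subset(0,\infty)\). If \(G_{\mu_\vartheta}=G_{\mu_{\widetilde\vartheta}}\) on \((0,\infty)\), or even on a nonempty open subinterval, Theorem~\ref{T:stieltjes-laplace-identifiability} gives \(\mu_\vartheta=\mu_{\widetilde\vartheta}\). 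It follows that \(\mathbf 1_{I(\vartheta)}=\mathbf 1_{I(\widetilde\vartheta)}\) Lebesgue-almost everywhere.

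It remains to recover \(m\) and the endpoints from this equality class. The strict ordering \(a_1<b_1<\cdots<a_m<b_m\) means that \(I(\vartheta)\) is a disjoint union of nondegenerate closed intervals separated by gaps of positive length. Consider the distribution function \(F(y)=\mu_\vartheta((0,y])\). It is continuous and piecewise linear, with slope \(1\) on each \((a_j,b_j)\) and slope \(0\) elsewhere. Its points of non-differentiability are exactly the \(2m\) points \(a_1,b_1,\ldots,a_m,b_m\), and since \(F\) depends only on \(\mu_\vartheta\), so do these points. Equivalently, the closed support of \(\mu_\vartheta\) equals \(I(\vartheta)\), because each interval is nondegenerate. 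Its connected components then give \(m\) and the endpoints. This is the only place where a small amount of care is needed, namely passing from almost-everywhere equality to equality of parameters. Nondegeneracy of the intervals and of the gaps settles it.

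\emph{Exponential-sum identity.} This is a direct computation. Since the intervals are disjoint,
\[
t\int_0^\infty e^{-ty}\,\mu_\vartheta(\mathrm dy)
=
\sum_{j=1}^m t\int_{a_j}^{b_j}e^{-ty}\,\mathrm dy
=
\sum_{j=1}^m\bigl(e^{-a_jt}-e^{-b_jt}\bigr)
\]
for every \(t>0\). Here each term uses \(\int_{a}^{b}e^{-ty}\,\mathrm dy=(e^{-at}-e^{-bt})/t\).
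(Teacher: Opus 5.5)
Your proposal is correct and follows essentially the same route as the paper. Theorem~\ref{T:stieltjes-laplace-identifiability} recovers \(\mu_\vartheta\), and the support \(I(\vartheta)\) (equivalently, the kinks of the distribution function, as you note) then yields \(m\) and the endpoints; the exponential-sum identity comes from integrating \(e^{-ty}\) over each component interval.
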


\begin{proof}
By Theorem~\ref{T:stieltjes-laplace-identifiability}, \(G_{\mu_\vartheta}\) determines \(\mu_\vartheta\).
Since \(\operatorname{supp}\mu_\vartheta=I(\vartheta)\), its connected components determine \(m\) and all their endpoints.
The displayed identity follows by integrating \(e^{-t y}\) over the component intervals.
\end{proof}

For the stability analysis, fix \(m\geq1\), \(0<\alpha<\beta<\infty\), and \(\delta>0\), and recall the separated compact class
\[
\Theta_{m,\alpha,\beta,\delta}
=
\left\{
\vartheta\in\mathbb R^{2m}:
\begin{array}{l}
\alpha\leq a_1,\quad b_m\leq\beta,\\
b_j-a_j\geq\delta\quad (1\leq j\leq m),\\
a_{j+1}-b_j\geq\delta\quad (1\leq j<m)
\end{array}
\right\}.
\]
This is a compact convex subset of \(\mathbb R^{2m}\), and it is nonempty if and only if
\[
(2m-1)\delta\leq\beta-\alpha.
\]
Indeed, the \(m\) interval lengths and \(m-1\) intervening gaps require a total length of at least \((2m-1)\delta\); conversely, an admissible configuration is obtained by taking all these lengths equal to \(\delta\).

Fix \(0<r_-<r_+<\infty\) and equip \(C([r_-,r_+])\) with the supremum norm.
Define the covariance map by
\[
\mathcal G_m:
\Theta_{m,\alpha,\beta,\delta}
\longrightarrow C([r_-,r_+]),
\qquad
\mathcal G_m(\vartheta)
=
G_{\mu_\vartheta}\big|_{[r_-,r_+]}.
\]
To differentiate this map, introduce the open neighborhood
\[
U_m
=
\left\{
\vartheta\in\mathbb R^{2m}:
\begin{array}{l}
\alpha/2<a_1,\quad b_m<2\beta,\\
b_j-a_j>\delta/2\quad (1\leq j\leq m),\\
a_{j+1}-b_j>\delta/2\quad (1\leq j<m)
\end{array}
\right\}
\]
of \(\Theta_{m,\alpha,\beta,\delta}\), and extend \(\mathcal G_m\) to
\[
\widetilde{\mathcal G}_m:
U_m\longrightarrow C([r_-,r_+]),
\qquad
\widetilde{\mathcal G}_m(\vartheta)
=
G_{\mu_\vartheta}\big|_{[r_-,r_+]}.
\]

The following lemma establishes the smoothness and injectivity properties of the covariance map needed for stability.

\begin{lemma}
\label{L:finite-union-immersion}
The map \(\widetilde{\mathcal G}_m:U_m\to C([r_-,r_+])\) is \(C^\infty\), and its restriction \(\mathcal G_m\) to \(\Theta_{m,\alpha,\beta,\delta}\) is injective.
Moreover, for every \(\vartheta\in\Theta_{m,\alpha,\beta,\delta}\), the derivative
\[
\mathrm D\widetilde{\mathcal G}_m(\vartheta):
\mathbb R^{2m}\longrightarrow C([r_-,r_+])
\]
is injective.
\end{lemma}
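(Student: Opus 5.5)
Since \(\mu_\vartheta\) is the indicator of a union of intervals, the covariance transform can be written with the endpoints appearing only as integration limits:
\[
G_{\mu_\vartheta}(r)=\sum_{j,k=1}^m\int_{a_j}^{b_j}\int_{a_k}^{b_k}\frac{\mathrm dy\,\mathrm dv}{\bigl(r^2+(y+v)^2\bigr)^2}.
\]
I first introduce the primitive \(\Phi_r(s)\) with \(\Phi_r''(s)=(r^2+s^2)^{-2}\), which is explicitly given by arctangents and rational functions. Then each block equals \(\Phi_r(b_j+b_k)-\Phi_r(a_j+b_k)-\Phi_r(b_j+a_k)+\Phi_r(a_j+a_k)\). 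On \(U_m\) all arguments \(s\) lie in \([\alpha,4\beta]\), and \((r,s)\mapsto\Phi_r(s)\) is real analytic on \([r_-,r_+]\times[\alpha,4\beta]\). Hence each partial derivative in \(\vartheta\) of any order is a continuous function of \(r\), uniformly continuous in \((r,\vartheta)\) on compacts. Standard differentiation of parameter-dependent families into \(C([r_-,r_+])\) with the sup norm then gives that \(\widetilde{\mathcal G}_m\) is \(C^\infty\). Injectivity of \(\mathcal G_m\) is immediate from Proposition~\ref{P:finite-union-exponential-sum}. Equality on \([r_-,r_+]\) extends to all \(r>0\) by the analyticity in Lemma~\ref{L:covariance-bilinear-analyticity}, so Theorem~\ref{T:stieltjes-laplace-identifiability} applies. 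Two distinct \(\vartheta\in\Theta_{m,\alpha,\beta,\delta}\) give distinct sets \(I(\vartheta)\), since the intervals are separated by gaps \(\ge\delta\), and hence distinct measures.

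For injectivity of the derivative, the fundamental theorem of calculus gives
\[
\partial_{b_j}G_{\mu_\vartheta}(r)=2\int_{I(\vartheta)}\frac{\mathrm dv}{(r^2+(b_j+v)^2)^2},\qquad
\partial_{a_j}G_{\mu_\vartheta}(r)=-2\int_{I(\vartheta)}\frac{\mathrm dv}{(r^2+(a_j+v)^2)^2}.
\]
Let \(h\in\mathbb R^{2m}\) and suppose \(\mathrm D\widetilde{\mathcal G}_m(\vartheta)h=0\) on \([r_-,r_+]\). This says \(G'_{\mu_\vartheta,\sigma}:=2\mathcal B_{\mu_\vartheta,\sigma}(r)=0\), where
\[
\sigma=\sum_j\bigl(h_{b_j}\delta_{b_j}-h_{a_j}\delta_{a_j}\bigr)
\]
is a finite signed atomic measure and \(\mathcal B\) is the bilinear form of Lemma~\ref{L:covariance-bilinear-analyticity}. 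By that lemma \(\mathcal B_{\mu_\vartheta,\sigma}\) is holomorphic on a strip, so it vanishes for all \(r>0\). I then repeat the proof of Theorem~\ref{T:stieltjes-laplace-identifiability} verbatim in its bilinear form. Integrating \(\mathcal B(\sqrt u)\) from \(t\) to \(\infty\) gives the Stieltjes transform of the pushforward of \(\mu_\vartheta\ast\sigma\) under \(s\mapsto s^2\). Lemma~\ref{L:stieltjes-uniqueness-compact} yields \(\mu_\vartheta\ast\sigma=0\). Taking Laplace transforms gives \(\widehat{\mu_\vartheta}(z)\widehat\sigma(z)=0\) for \(z>0\). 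Since \(\widehat{\mu_\vartheta}(z)>0\), we get \(\widehat\sigma=0\), and Lemma~\ref{L:laplace-uniqueness-compact} gives \(\sigma=0\). Because the \(2m\) points \(a_1<b_1<\dots<b_m\) are distinct, all coefficients of \(h\) vanish.

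The main obstacle I expect is the smoothness step, not injectivity. One must justify that difference quotients converge in the sup norm over \(r\), which requires uniform bounds on the \(s\)-derivatives of \(\Phi_r\). These follow from \(r^2+s^2\ge r_-^2+\alpha^2>0\). The linearization argument is clean once the derivative is recognized as a bilinear covariance transform against an atomic signed measure. Positivity of the Laplace transform of \(\mu_\vartheta\) replaces the square-root selection used in the nonlinear identifiability proof.
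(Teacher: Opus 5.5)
Your proof is correct and follows the paper's argument: injectivity of \(\mathcal G_m\) comes from Theorem~\ref{T:stieltjes-laplace-identifiability}, and injectivity of the derivative comes from writing it as \(2\mathcal B_{\mu_\vartheta,\sigma}\) against the atomic endpoint measure, extending by analyticity, applying Stieltjes uniqueness to \(\mu_\vartheta\ast\sigma\), and dividing out the positive Laplace transform of \(\mu_\vartheta\). The only variation is the smoothness step, where you use the explicit double primitive (in fact \(\Phi_r(s)=s\arctan(s/r)/(2r^3)\)) rather than the paper's affine rescaling to \([0,1]^2\); note also that, since \(\mu_\vartheta\ast\sigma\) is signed, the Tonelli step of the theorem must be replaced by Fubini using \(\int_t^\infty(u+s^2)^{-2}\,\mathrm du\le t^{-1}\), as the paper does explicitly.
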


\begin{proof}
Fix \(\vartheta=(a_1,b_1,\ldots,a_m,b_m)\in U_m\), and set
\[
\ell_j:=b_j-a_j,
\qquad
\phi_j(u):=a_j+u\ell_j,
\qquad
u\in[0,1],\quad 1\leq j\leq m.
\]
The changes of variables \(y=\phi_j(s)\) and \(v=\phi_k(t)\) give, for \(r\in[r_-,r_+]\),
\[
\widetilde{\mathcal G}_m(\vartheta)(r)
=
\sum_{j,k=1}^m
\ell_j\ell_k
\int_{[0,1]^2}
\frac{\mathrm ds\,\mathrm dt}
{\bigl(r^2+(\phi_j(s)+\phi_k(t))^2\bigr)^2}.
\]
For every compact subset of \(U_m\), all endpoint derivatives of the summands are uniformly bounded on \([r_-,r_+]\times[0,1]^2\).
Differentiation under the integrals, uniformly in \(r\), therefore gives \(\widetilde{\mathcal G}_m\in C^\infty(U_m;C([r_-,r_+]))\).

If \(\vartheta,\widetilde\vartheta\in \Theta_{m,\alpha,\beta,\delta}\) and \(\mathcal G_m(\vartheta)=\mathcal G_m(\widetilde\vartheta)\), then Theorem~\ref{T:stieltjes-laplace-identifiability}, applied to this equality on \((r_-,r_+)\), gives \(\mu_\vartheta=\mu_{\widetilde\vartheta}\).
The ordered connected components of their common support have the same endpoints, so \(\vartheta=\widetilde\vartheta\).
Thus \(\mathcal G_m\) is injective.

For \(\vartheta\in U_m\) and \(h=(h_1,\ldots,h_{2m})\in\mathbb R^{2m}\), set
\[
\dot\mu_{\vartheta,h}
=
\sum_{j=1}^{m}
\left(
-h_{2j-1}\delta_{a_j}+h_{2j}\delta_{b_j}
\right).
\]
Differentiating the component intervals and using the symmetry of the kernel gives, for \(r\in[r_-,r_+]\),
\[
\mathrm D\widetilde{\mathcal G}_m(\vartheta)[h](r)
=
2
\int_0^\infty\int_0^\infty
\frac{
\dot\mu_{\vartheta,h}(\mathrm dy)\,
\mu_\vartheta(\mathrm dv)}
{\bigl(r^2+(y+v)^2\bigr)^2}.
\]

Now let \(\vartheta\in\Theta_{m,\alpha,\beta,\delta}\) and assume that \(\mathrm D\widetilde{\mathcal G}_m(\vartheta)[h]=0\).
By Lemma~\ref{L:covariance-bilinear-analyticity}, this equality extends to all \(r>0\).
For the finite signed compactly supported measure \(\sigma=\dot\mu_{\vartheta,h}\ast\mu_\vartheta\), it follows that \(\int_0^\infty(r^2+s^2)^{-2}\,\sigma(\mathrm ds)=0\) for every \(r>0\).
Since \(\sigma\) is finite and \(\int_t^\infty(u+s^2)^{-2}\,\mathrm du\leq t^{-1}\), Fubini's theorem applies, and integration with respect to \(u=r^2\) gives
\[
\int_0^\infty
\frac{\sigma(\mathrm ds)}{t+s^2}
=
0,
\qquad t>0.
\]
Let \(\eta\) be the pushforward of \(\sigma\) under \(s\mapsto s^2\).
Lemma~\ref{L:stieltjes-uniqueness-compact} gives \(\eta=0\), and the bijectivity of \(s\mapsto s^2\) on \([0,\infty)\) gives \(\sigma=0\).

Taking Laplace transforms of \(\dot\mu_{\vartheta,h}\ast\mu_\vartheta=0\) gives
\[
\left(
\int_0^\infty
e^{-z y}\,\dot\mu_{\vartheta,h}(\mathrm dy)
\right)
\left(
\int_0^\infty
e^{-z y}\,\mu_\vartheta(\mathrm dy)
\right)
=
0,
\qquad z>0.
\]
The second factor is strictly positive, so the first vanishes for every \(z>0\).
Lemma~\ref{L:laplace-uniqueness-compact} gives \(\dot\mu_{\vartheta,h}=0\).
Since the endpoint atoms are pairwise distinct, all their coefficients vanish, and hence \(h=0\).
\end{proof}

\subsection{Stable recovery}
\label{subsec:fixed-m-stability}

\begin{proof}[Proof of Theorem~\ref{T:finite-union-stability}]
Write
\[
\Theta:=\Theta_{m,\alpha,\beta,\delta},
\qquad
\|\cdot\|_2:=\|\cdot\|_{\mathbb R^{2m}},
\qquad
\|\cdot\|_C:=\|\cdot\|_{C([r_-,r_+])}.
\]
By Lemma~\ref{L:finite-union-immersion}, \(\mathrm D\widetilde{\mathcal G}_m(\vartheta)\) is injective for every \(\vartheta\in\Theta\).
Continuity and compactness therefore give
\[
\eta
:=
\min_{\substack{\vartheta\in\Theta\\ \|h\|_2=1}}
\left\|
\mathrm D\widetilde{\mathcal G}_m(\vartheta)[h]
\right\|_C
>0.
\]

Since \(\widetilde{\mathcal G}_m\) is \(C^2\) on \(U_m\) and \(\Theta\) is compact, its second derivative is uniformly bounded on \(\Theta\).
Convexity of \(\Theta\) and Taylor's theorem consequently give
\[
\left\|
\mathcal G_m(\vartheta+h)-\mathcal G_m(\vartheta)
-
\mathrm D\widetilde{\mathcal G}_m(\vartheta)[h]
\right\|_C
\leq
M_2\|h\|_2^2
\]
for some \(M_2<\infty\) whenever \(\vartheta,\vartheta+h\in\Theta\).

Choose \(\varepsilon_0>0\) such that \(M_2\varepsilon_0\leq\eta/2\).
For \(\vartheta,\widetilde\vartheta\in\Theta\), set \(h=\widetilde\vartheta-\vartheta\).
If \(\|h\|_2\leq\varepsilon_0\), then
\[
\left\|
\mathcal G_m(\widetilde\vartheta)-\mathcal G_m(\vartheta)
\right\|_C
\geq
\eta\|h\|_2-M_2\|h\|_2^2
\geq
\frac{\eta}{2}\|h\|_2.
\]

For the remaining pairs, let
\[
K_{\varepsilon_0}
=
\left\{
(\vartheta,\widetilde\vartheta)\in\Theta^2:
\|\vartheta-\widetilde\vartheta\|_2\geq\varepsilon_0
\right\}.
\]
If \(K_{\varepsilon_0}=\varnothing\), the preceding estimate proves the theorem.
Otherwise, compactness of \(K_{\varepsilon_0}\), continuity of \(\mathcal G_m\), and injectivity of its restriction to \(\Theta\) give
\[
d_0
:=
\min_{(\vartheta,\widetilde\vartheta)\in K_{\varepsilon_0}}
\left\|
\mathcal G_m(\vartheta)-\mathcal G_m(\widetilde\vartheta)
\right\|_C
>
0.
\]
Writing \(D:=\operatorname{diam}_{\|\cdot\|_2}\Theta\), every pair in \(K_{\varepsilon_0}\) satisfies
\[
\|\vartheta-\widetilde\vartheta\|_2
\leq
D
\leq
\frac{D}{d_0}
\left\|
\mathcal G_m(\vartheta)-\mathcal G_m(\widetilde\vartheta)
\right\|_C.
\]
Combining the two estimates proves the theorem, with \(C=\max\{2/\eta,D/d_0\}\) when \(K_{\varepsilon_0}\neq\varnothing\) and \(C=2/\eta\) otherwise.
In either case, \(C\) depends only on \(m,\alpha,\beta,\delta,r_-\), and \(r_+\).
\end{proof}

We now extend the fixed-\(m\) stability estimate to a bounded but unknown number of component intervals.
Fix \(M\geq1\), and let
\[
\mathfrak M
:=
\left\{
1\leq m\leq M:
(2m-1)\delta\leq\beta-\alpha
\right\}.
\]
Assume that \(\mathfrak M\neq\varnothing\), and set
\[
\Theta_{\leq M}
:=
\bigsqcup_{m\in\mathfrak M}
\Theta_{m,\alpha,\beta,\delta}.
\]

\begin{corollary}
\label{C:stable-recovery-of-m}
For distinct \(m,\ell\in\mathfrak M\),
\[
\operatorname{dist}_{C([r_-,r_+])}
\left(
\mathcal G_m(\Theta_{m,\alpha,\beta,\delta}),
\mathcal G_\ell(\Theta_{\ell,\alpha,\beta,\delta})
\right)
>
0.
\]
Moreover, there exist constants \(\varepsilon,C>0\), depending only on \(M,\alpha,\beta,\delta,r_-\), and \(r_+\), such that whenever \(m,\ell\in\mathfrak M\), \(\vartheta\in\Theta_{m,\alpha,\beta,\delta}\), and \(\widetilde\vartheta\in\Theta_{\ell,\alpha,\beta,\delta}\) satisfy
\[
\left\|
\mathcal G_m(\vartheta)-\mathcal G_\ell(\widetilde\vartheta)
\right\|_{C([r_-,r_+])}
<
\varepsilon,
\]
one has \(m=\ell\) and
\[
\|\vartheta-\widetilde\vartheta\|_{\mathbb R^{2m}}
\leq
C
\left\|
\mathcal G_m(\vartheta)-\mathcal G_m(\widetilde\vartheta)
\right\|_{C([r_-,r_+])}.
\]
In particular, \(G_{\mu_\vartheta}\big|_{[r_-,r_+]}\) determines the number of intervals and all endpoints on \(\Theta_{\leq M}\).
\end{corollary}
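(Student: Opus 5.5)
The plan has three steps: get a positive distance between the image sets for distinct \(m,\ell\) from compactness plus identifiability, take \(\varepsilon\) as the minimum of these distances, and then invoke Theorem~\ref{T:finite-union-stability} with \(m\) fixed.

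\textbf{Step 1 (separation of images).} Fix distinct \(m,\ell\in\mathfrak M\). By Lemma~\ref{L:finite-union-immersion}, \(\mathcal G_m\) and \(\mathcal G_\ell\) are continuous on the compact sets \(\Theta_{m,\alpha,\beta,\delta}\) and \(\Theta_{\ell,\alpha,\beta,\delta}\). Hence their images are compact subsets of \(C([r_-,r_+])\). The distance between two compact sets is attained, so it is positive unless the sets intersect.

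Suppose \(\mathcal G_m(\vartheta)=\mathcal G_\ell(\widetilde\vartheta)\) on \([r_-,r_+]\). Then \(G_{\mu_\vartheta}=G_{\mu_{\widetilde\vartheta}}\) on \((r_-,r_+)\), and both measures are supported in \([\alpha,\beta]\). Theorem~\ref{T:stieltjes-laplace-identifiability} therefore gives \(\mu_\vartheta=\mu_{\widetilde\vartheta}\).

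Proposition~\ref{P:finite-union-exponential-sum} then says the common support has a well-defined number of connected components. The \(\delta\)-separation guarantees that \(I(\vartheta)\) has exactly \(m\) components and \(I(\widetilde\vartheta)\) has exactly \(\ell\): the gaps are nondegenerate intervals, so the closed intervals do not merge. This forces \(m=\ell\), a contradiction. Hence every pairwise distance \(d_{m,\ell}\) is positive.

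\textbf{Step 2 (choice of \(\varepsilon\)).} Since \(\mathfrak M\) is finite, with at most \(M\) elements, define \(\varepsilon:=\min_{m\ne\ell}d_{m,\ell}\), or \(\varepsilon:=1\) if \(\mathfrak M\) is a singleton. This \(\varepsilon\) is positive and depends only on \(M,\alpha,\beta,\delta,r_-,r_+\), because each image set is determined by these data.

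If \(\|\mathcal G_m(\vartheta)-\mathcal G_\ell(\widetilde\vartheta)\|_C<\varepsilon\), then \(m\ne\ell\) would give a distance at least \(d_{m,\ell}\ge\varepsilon\), which is impossible. So \(m=\ell\).

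\textbf{Step 3 (Lipschitz constant and conclusion).} With \(m=\ell\), Theorem~\ref{T:finite-union-stability} gives the bound with a constant \(C_m\). Take \(C:=\max_{m\in\mathfrak M}C_m\), which again depends only on the listed parameters.

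The final assertion follows by combining the two facts. Distinct classes have disjoint images, so \(G_{\mu_\vartheta}|_{[r_-,r_+]}\) determines \(m\). Within a fixed class, \(\mathcal G_m\) is injective by Lemma~\ref{L:finite-union-immersion}, so it determines all endpoints.

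\textbf{Main obstacle.} The one point needing care is the component count in Step 1: checking that the \(\delta\)-separation makes the number of components of \(\operatorname{supp}\mu_\vartheta\) exactly \(m\). Everything else reduces to compactness and results already proved.
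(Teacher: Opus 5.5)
Your proposal is correct and follows essentially the same route as the paper. The images are compact and, by Theorem~\ref{T:stieltjes-laplace-identifiability}, pairwise disjoint; \(\varepsilon\) is the minimum of the finitely many positive distances (or \(1\) if \(\mathfrak M\) is a singleton), and \(C=\max_{m\in\mathfrak M}C_m\) comes from Theorem~\ref{T:finite-union-stability}. The component-count point you flag is already automatic from the strict ordering \(a_j<b_j<a_{j+1}\), which is why the paper simply says that equal supports force \(m=\ell\).
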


\begin{proof}
For \(m\in\mathfrak M\), set \(\mathcal A_m:=\mathcal G_m(\Theta_{m,\alpha,\beta,\delta})\).
This set is compact by the compactness of \(\Theta_{m,\alpha,\beta,\delta}\) and the continuity of \(\mathcal G_m\).
If \(\mathcal A_m\cap\mathcal A_\ell\neq\varnothing\), then two corresponding covariance transforms agree on \((r_-,r_+)\).
Theorem~\ref{T:stieltjes-laplace-identifiability} gives equality of the associated height measures and hence of their supports, forcing \(m=\ell\).
Thus the sets \(\mathcal A_m\) are pairwise disjoint, and distinct such sets have positive distance because they are compact.

If \(\mathfrak M\) contains at least two elements, choose \(\varepsilon\) to be the minimum of these finitely many positive distances; otherwise, set \(\varepsilon=1\).
Transforms at distance less than \(\varepsilon\) must then correspond to the same value of \(m\).

For each \(m\in\mathfrak M\), let \(C_m\) be the constant in Theorem~\ref{T:finite-union-stability}.
Taking \(C:=\max_{m\in\mathfrak M}C_m\) gives the endpoint estimate.
The final assertion follows by taking the covariance transforms equal.
\end{proof}

\begin{remark}
\label{R:why-separated-classes-are-needed}
The lower bounds on interval lengths and gaps are essential for uniform recovery.
If an interval length or gap approaches zero, a component may disappear or two components may merge while the corresponding covariance transforms remain arbitrarily close, so different component counts cannot be uniformly separated.
\end{remark}

\end{document}